\numberwithin{equation}{section}
\newtheorem{theorem}{Theorem}[section]
\newtheorem{Remark}{Remark}[section]
\newtheorem{proposition}{Proposition}[section]
\newtheorem{Lemma}{Lemma}[section]
\newcommand{\R}{\mathbb R}
\def\rdd{\bR^{2d}}
\def\bR{\mathbb{R}}
\def\bC{\mathbb{C}}
\def\bN{\mathbb{N}}
\def\cS{\mathcal{S}}
\def\rd{\bR^d}
\def\rdd{\bR^{2d}}
\def\la{\langle}
\def\ra{\rangle}
\def\*b{*_{\bullet}}
\def\S0{S^0_{0,0}}
\def\Bd'{B_{\delta'}}
\def\cBd'{\bar{B}_{\delta'}}
\begin{document}
\baselineskip16pt
\title[]{On heat equations associated with fractional harmonic oscillators}
\author[D. Bhimani]{Divyang G. Bhimani}
\address{Department of Mathematics, Indian Institute of Science Education and Research, Dr. Homi Bhabha Road, Pune 411008, India}
\email{divyang.bhimani@iiserpune.ac.in}
\author[R.  Manna]{Ramesh Manna}
\address{School of Mathematical Sciences, National Institute of Science Education and Research Bhubaneswar, An OCC of Homi Bhabha National Institute, Jatni 752050, India}
\email{rameshmanna@niser.ac.in}
\author[F. Nicola ]{Fabio Nicola }
\address{Dipartimento di Scienze Matematiche ``G. L. Lagrange'', Politecnico di Torino, corso Duca degli Abruzzi 24, 10129 Torino, Italy}
\email{fabio.nicola@polito.it}
\author[S.  Thangavelu]{Sundaram Thangavelu}
\address{Department of Mathematics, Indian Institute of Science, 560 012 Bangalore, India}
\email{veluma@iisc.ac.in}
\author[S.I. Trapasso]{S. Ivan Trapasso}
\address{Dipartimento di Scienze Matematiche ``G. L. Lagrange'', Politecnico di Torino, corso Duca degli Abruzzi 24, 10129 Torino, Italy}
\email{salvatore.trapasso@polito.it}

\subjclass[2010]{35K05,  35S05 (primary), 46E30 (secondary)}
\keywords{Dissipative estimates, heat equations, fractional harmonic oscillator,  wellposedness}
\date{}

\maketitle
 \begin{abstract} We establish some fixed-time decay estimates in Lebesgue spaces for the fractional heat propagator $e^{-tH^{\beta}}$, $t, \beta>0$, associated with the harmonic oscillator $H=-\Delta + |x|^2$. We then prove some local and global wellposedness results for nonlinear fractional heat equations.  
  \end{abstract}
\section{Introduction}\label{id}
Consider the heat equation associated with the fractional harmonic oscillator, namely
\begin{equation}\label{fHTEhom}
\begin{cases}
\partial_t u(t,x) + H^{\beta}u(t,x)=0\\
u(0,x)= u_0(x),
\end{cases}  \quad (t, x) \in \R^{+}\times  \R^d, 
\end{equation}
where $H^{\beta}=(-\Delta +|x|^2)^{\beta}$, $\beta>0$, and $u(t,x)\in \bC$. 

Strictly speaking, the corresponding fractional heat semigroup $e^{-t H^{\beta}}$ is defined in terms of the spectral decomposition of the standard Hermite operator $H=H^{1}=-\Delta+|x|^2$. To be precise, recall that \[ H = \sum_{k=0}^\infty (2k+d) P_k, \] where $P_k$ stands for the orthogonal projection of $ L^2(\mathbb R^d) $ onto the eigenspace corresponding to the eigenvalue $(2k+d)$ -- see Section \ref{fhm} below for further details. As a consequence of the spectral theorem, we can consider the family of fractional powers of $H$ defined by
\[ H^\beta = \sum_{k=0}^\infty (2k+d)^\beta P_k, \quad \beta >0. \] The heat semigroup $e^{-t H^{\beta}}$ is then defined accordingly by 
\[e^{-t H^{\beta}}f =   \sum_{k=0}^\infty e^{-t(2k+d)^{\beta}} P_kf, \quad f \in L^2(\rd). \]

While there is a wealth of literature on the semigroup $e^{-t (-\Delta)^\beta}$ (see e.g., \cite{miao2008well, weissler1980local}), stimulated by the very wide range of physics-inspired models involving the fractional Laplacian \cite{laskin2002fractional, garofalo2017fractional}, the current research of the semigroup $e^{-tH^\beta}$ is rather limited, even in fundamental settings such as the Lebesgue spaces. This is particularly striking in view of the role played by the Hermite operator $H$ and its fractional powers $H^\beta$ in several aspects of quantum physics and mathematical analysis \cite{thangavelu1993lectures, lee2012schrodinger}. 

The purpose of this note is to advance the knowledge of the fractional heat semigroup, in the wake of a research program initiated by the authors in \cite{bhimani2020phase}. In particular, our main result is a set of fixed-time decay estimates for $e^{-tH^\beta}$ in the Lebesgue space setting. 

\begin{theorem}\label{mt} For $1 \leq  p,q\leq\infty$ and $\beta >0,$ set
\[
 \quad \sigma_\beta \coloneqq  \frac{d}{2\beta} \Big|\frac{1}{p}-\frac{1}{q}\Big|.
 \]
\begin{enumerate}
\item \label{P1} 
 If  $p,q \in (1,  \infty),$ or $(p,q)=(1, \infty),$ or $p=1$ and $q \in [2, \infty),$ or  $p\in (1, \infty)$ and $q=1,$ then there exists a constant $C>0$ such that
\begin{equation}\label{eq mainthm}
\|e^{-tH^\beta} f\|_{L^q} \le  \begin{cases}
C e^{-td^\beta} \|f\|_{L^p}  &  \text{if} \quad t\geq 1\\
 C t^{-\sigma_\beta} \|f\|_{L^p} &  \text{if} \quad   0<t\leq 1.
\end{cases} 
\end{equation} 
\item \label{P2} 
If $0<\beta \leq 1,$ then the above estimate holds for $p,q \in [1,  \infty].$
\end{enumerate}
\end{theorem}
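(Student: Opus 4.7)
The plan is to split the analysis into two time regimes. For the long-time branch $t\ge 1$ I would use the spectral gap $H\ge d I$, writing
\[
e^{-tH^\beta}=e^{-H^\beta/2}\,e^{-(t-1)H^\beta}\,e^{-H^\beta/2},
\]
so that the middle factor is bounded on $L^2$ by $e^{-(t-1)d^\beta}$, while the two flanking fixed-time operators, by the short-time estimate applied at $t=1/2$, are bounded $L^p\to L^2$ and $L^2\to L^q$ respectively. Composition yields $\|e^{-tH^\beta}\|_{L^p\to L^q}\lesssim e^{-td^\beta}$, so the whole theorem reduces to proving the short-time bound.

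For the short-time bound the starting point is the classical Mehler representation of $e^{-sH}$, whose kernel satisfies $|K_s(x,y)|\lesssim s^{-d/2}$ for $0<s\le 1$ and therefore produces $\|e^{-sH}\|_{L^p\to L^q}\lesssim s^{-(d/2)|1/p-1/q|}$ on the whole range $1\le p,q\le\infty$. Part (\ref{P2}), where $0<\beta\le 1$, follows by subordination:
\[
e^{-tH^\beta}=\int_0^\infty e^{-sH}\,\eta_t^\beta(s)\,ds,
\]
with $\eta_t^\beta$ the density of the one-sided $\beta$-stable subordinator. Thanks to the scaling $\eta_t^\beta(s)=t^{-1/\beta}\eta_1^\beta(s t^{-1/\beta})$ and to the finiteness of every negative moment of $\eta_1^\beta$ (smooth on $(0,\infty)$, super-polynomially small near $0$, algebraically decaying at $\infty$), integrating the Mehler bound against $\eta_t^\beta$ gives
\[
\|e^{-tH^\beta}\|_{L^p\to L^q}\lesssim\int_0^\infty s^{-(d/2)|1/p-1/q|}\eta_t^\beta(s)\,ds\lesssim t^{-\sigma_\beta}.
\]

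The case $\beta>1$ treated in part (\ref{P1}) is the genuinely new difficulty, since $\lambda\mapsto e^{-t\lambda^\beta}$ is no longer the Laplace transform of a positive measure in $e^{-s\lambda}$. The plan is to combine the trivial bound $\|e^{-tH^\beta}\|_{L^2\to L^2}\le 1$ with a dispersive estimate $\|e^{-tH^\beta}\|_{L^1\to L^\infty}\lesssim t^{-d/(2\beta)}$ and then fill in the remaining pairs $(p,q)$ by Riesz--Thorin. To obtain the dispersive bound, in the absence of a closed kernel formula I would run Stein's complex interpolation on the analytic family $z\mapsto e^{-tH^z}$ between the Mehler line $\Re z=1$ and the imaginary axis, where uniform $L^p$-boundedness of the imaginary powers $H^{is}$ is available only for $1<p<\infty$; this restriction is precisely what confines part (\ref{P1}) to the stated $(p,q)$-range and prevents reaching the $L^1$ and $L^\infty$ endpoints for $\beta>1$. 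The main obstacle is therefore the dispersive $L^1\to L^\infty$ estimate for $\beta>1$ and the delicate handling of endpoint multiplier theory for the Hermite operator, all other steps being routine manipulations of the Mehler kernel and of the subordinator.
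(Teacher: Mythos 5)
Your treatment of Part \eqref{P2} follows essentially the paper's route: Mehler kernel for $e^{-sH}$, Bochner subordination, and negative moments of the $\beta$-stable subordinator; your long-time factorization $e^{-tH^\beta}=e^{-H^\beta/2}e^{-(t-1)H^\beta}e^{-H^\beta/2}$ is a legitimate alternative to the paper's modulation-space embedding argument. One caveat already here: the sup bound $|K_s(x,y)|\lesssim s^{-d/2}$ gives $L^1\to L^\infty$ and, with uniform $L^p\to L^p$ bounds, the cases $q\ge p$ by interpolation; but no kernel sup bound can yield $L^p\to L^q$ with $q<p$ (a larger Lebesgue space must be mapped into a smaller one). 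For that you must exploit the confining Gaussian factor $e^{-\frac14(\tanh s)|x+y|^2}$ in Mehler's formula together with H\"older's inequality, which is exactly what the paper's Lemma \ref{dc} does via the convolution identity \eqref{re}.

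The genuine gap is in Part \eqref{P1} for $\beta>1$, which is the new content of the theorem. First, Stein interpolation of the analytic family $z\mapsto e^{-tH^{z}}$ between $\Re z=0$ and $\Re z=1$ can only reach $\beta\in(0,1)$ --- the regime already covered by subordination --- and gives nothing for $\beta>1$; to reach $\beta>1$ you would need a priori bounds on a line $\Re z=N>1$, which is the original problem, so the scheme is circular. Second, even granting the dispersive bound $\|e^{-tH^\beta}\|_{L^1\to L^\infty}\lesssim t^{-d/(2\beta)}$, Riesz--Thorin against $\|e^{-tH^\beta}\|_{L^2\to L^2}\le 1$ (plus $L^p\to L^p$ bounds) only produces the pairs with $q\ge p$; it cannot give the asserted cases with $q<p$ (e.g. $p\in(1,\infty)$, $q=1$), which again require spatial confinement. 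The paper's actual mechanism is different: a parametrix for $e^{-tH^\beta}$ in the global (Shubin) pseudodifferential calculus, with symbol $e^{-ta(x,\xi)}\bigl(1+\sum_{j,l}t^{l}u_{l,j}\bigr)+r_t''$ and $a(x,\xi)\ge(1+|x|+|\xi|)^{2\beta}$. Extracting the weight $e^{-\frac t4\langle x\rangle^{2\beta}}$ leaves a family of symbols uniformly bounded in the H\"ormander class $S_{1,0}^{0}$, whose $L^p$ boundedness for $1<p<\infty$ is the true source of the range restriction (not imaginary powers); the weight then gives $L^p\to L^q$ for $q\le p$ by H\"older, a kernel estimate gives $L^1\to L^\infty$, and interpolation together with a $TT^{\star}$ argument supplies the remaining admissible endpoints. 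Without some substitute for this confinement mechanism, your plan cannot establish the $q<p$ half of the statement for $\beta>1$.
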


To the best of our knowledge, the dissipative estimate in Theorem \ref{mt} is new even for the Hermite operator ($\beta=1$). We also stress that the time decay at infinity in \eqref{eq mainthm} is sharp for any choice of  Lebesgue exponents. Moreover, since the power of $t$ is never positive for small time, we infer that there is a singularity near the origin for $p\neq q$.  

It is worth emphasizing that the fractional Hermite propagator $e^{-tH^\beta}$ is not a Fourier multiplier, hence we cannot rely on the arguments typically used to establish $L^p-L^q$ space-time estimates for the fractional heat propagator $e^{-t(-\Delta)^\beta}$ -- see for instance \cite[Lemma 3.1]{miao2008well}. In fact, we will resort to techniques of pseudodifferential calculus to deal with the operators $e^{-tH^\beta}$ and $e^{-tH}$ (cf.\ \cite[Section 4.5]{NicolaBook}), and also to Bochner's subordination formula in order to express the heat semigroup $e^{-tH^{\beta}}$, $0<\beta\leq 1$, in terms of solutions of the heat equation $ e^{-tH}$ (see \eqref{ifh}).

As an application of Theorem \ref{mt}, we investigate the wellposedness of 
\begin{equation}\label{fHTE}
\begin{cases}
\partial_t u(t,x) + H^{\beta}u(t,x)= |u(t,x)|^{\gamma-1} u(t,x)\\
u(0,x)= u_0(x),
\end{cases}  \quad (t, x) \in \R^{+}\times  \R^d, 
\end{equation} with $u(t,x)\in \bC$, $\beta>0$ and $\gamma>1$.

First, let us highlight that, due to the occurrence of the quadratic potential $|x|^2$, the problem \eqref{fHTE} has no scaling symmetry. Nevertheless, the companion fractional heat equation
\begin{equation}\label{fHTEs}
\begin{cases}
\partial_t u(t,x) + (-\Delta)^{\beta}u(t,x)= |u(t,x)|^{\gamma-1}u(t,x)\\
u(0,x)= u_0(x),
\end{cases}  \quad \quad (t, x) \in \R^{+}\times  \R^d ,
\end{equation}
is invariant under the following scaling transformation. For $\lambda>0$, set 
\[ u_{\lambda} (t,x) = \lambda^{\frac{2\beta}{\gamma -1}} u(\lambda^{2\beta} t, \lambda x) \quad \text{and} \quad  u_{0, \lambda}(x) = \lambda^{\frac{2\beta}{\gamma -1}} u_0(\lambda x). \]
If $u(t,x)$ is a solution of \eqref{fHTEs}  with  initial datum $u_0(x)$, then $u_{\lambda}(t,x)$ is also a solution of  \eqref{fHTEs} with initial datum  $u_{0, \lambda}(x)$.  The $L^p$ space is invariant under the above scaling only when $p=p^\beta_c \coloneqq \frac{d (\gamma-1)}{2\beta}$. Motivated by this remark, we shall say that \eqref{fHTE} is 
\begin{equation*}L^p-
\begin{cases} \text{sub-critical} & \text{if} \quad  1\leq p <p^\beta_c\\
\text{critical}  & \text{if} \quad p=p^\beta_c\\
\text{super-critical}  & \text{if} \quad p>p^\beta_c.
\end{cases}
\end{equation*}

Concerning the wellposedness of \eqref{fHTE}, our result can be stated as follow. 
\begin{theorem}\label{LWL} Assume that  $u_0 \in L^p(\R^d), 1<p< \infty$ and $\beta>0.$
\begin{enumerate}
	\item (Local well-posedness) \label{LWL1} If  $p>p_c^{\beta},$ then there
exists a $T>0$  such that \eqref{fHTE} has a  solution  $u \in C([0, T ], L^p (\R^d)).$  Moreover, $u$ extends to  a maximal interval $[0, T_{\max})$ such that either $T_{\max}=\infty$ or $T_{\max} < \infty$ and $\displaystyle \lim_{t\to T_{\max}} \|u(t)\|_{L^p} = \infty.$
\item \label{LWL2} (Lower blow-up rate) Consider $p>p_c^{\beta}$ and suppose that $T_{\max}<\infty,$  where $T_{\max}$ is the existence time of the resulting maximal solution of \eqref{fHTE}. Then
\[ \|u(t)\|_{L^p} \geq C  \left( T_{\max}- t \right) ^{\frac{d}{2p\beta}-\frac{1}{\gamma-1}}, \quad \text{for all} \ t \in [0, T_{\max}). \]
\item (Global existence)  \label{T1.3} If  $p=p_c^{\beta}$ and $\|u_0\|_{L^{p_c^{\beta}}}$ is sufficiently small, then $T_{max}=\infty.$
\end{enumerate}
\end{theorem}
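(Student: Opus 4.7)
The plan is to recast \eqref{fHTE} via Duhamel's formula as the fixed-point problem for
\[
\Phi(u)(t) \coloneqq e^{-tH^\beta}u_0 + \int_0^t e^{-(t-s)H^\beta}\bigl(|u(s)|^{\gamma-1}u(s)\bigr)\,ds,
\]
and to deploy a Weissler-type Banach fixed-point argument with Theorem \ref{mt} as the sole analytic input. Throughout, we rely on the standard pointwise bound $\bigl||u|^{\gamma-1}u - |v|^{\gamma-1}v\bigr| \lesssim (|u|^{\gamma-1}+|v|^{\gamma-1})|u-v|$ combined with Hölder's inequality.

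For \eqref{LWL1}, I would work in a ball of
\[
X_T = \Bigl\{u\in C([0,T],L^p(\R^d)) : \|u\|_{X_T} \coloneqq \sup_{0\le t\le T}\|u(t)\|_{L^p} + \sup_{0<t\le T} t^{\alpha}\|u(t)\|_{L^q} < \infty \Bigr\},
\]
where $q \in (\max(p,\gamma),\gamma p)$ (a non-empty interval since $p,\gamma > 1$) and $\alpha = \frac{d}{2\beta}(\frac{1}{p}-\frac{1}{q})$. Theorem \ref{mt} with exponent pairs $(p,p)$ and $(p,q)$ controls the linear part $e^{-tH^\beta}u_0$, and strong continuity at $t = 0$ in $L^p$ follows from a Schwartz-class density argument. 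For the nonlinear part, the identity $\||u|^{\gamma-1}u\|_{L^{q/\gamma}} = \|u\|_{L^q}^\gamma$ together with Theorem \ref{mt} applied from $L^{q/\gamma}$ into $L^p$ and $L^q$ reduces everything to Beta-function integrals of the form $\int_0^t(t-s)^{-\sigma}s^{-\alpha\gamma}\,ds$; a direct algebra shows that the resulting time factor in both target norms equals a constant multiple of $T^{1 - d(\gamma-1)/(2\beta p)}$. The subcritical assumption $p > p_c^\beta$ ensures this exponent is strictly positive, so choosing a ball of radius $2C\|u_0\|_{L^p}$ and $T$ sufficiently small (depending only on $\|u_0\|_{L^p}$) makes $\Phi$ a contraction; extension to a maximal interval $[0, T_{\max})$ with the stated blow-up dichotomy is then standard.

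For \eqref{LWL2}, the construction above is quantitative: the local existence time satisfies $T \gtrsim \|u_0\|_{L^p}^{-(\gamma-1)/(1 - d(\gamma-1)/(2\beta p))}$. Re-running the fixed-point argument at a generic $t\in[0,T_{\max})$ with initial datum $u(t)$ and invoking uniqueness forces $T_{\max} - t \gtrsim \|u(t)\|_{L^p}^{-(\gamma-1)/(1 - d(\gamma-1)/(2\beta p))}$; the elementary identity $\frac{1 - d(\gamma-1)/(2\beta p)}{\gamma-1} = \frac{1}{\gamma-1} - \frac{d}{2\beta p}$ then rearranges this to the announced lower blow-up rate.

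For \eqref{T1.3}, at criticality $p = p_c^\beta$ the exponent $1 - d(\gamma-1)/(2\beta p)$ vanishes and the subcritical time-gain disappears, so the previous argument collapses and local smallness of $T$ no longer suffices. Instead, the fixed-point problem should be solved globally in the scale-invariant weighted norm $\|u\|_Y = \sup_{t > 0} t^\alpha \|u(t)\|_{L^q}$ with $q \in (\max(p,\gamma), \gamma p)$: the Beta-function integral is now truly constant in $T$, and the same algebra yields a $Y$-contraction on a small ball whenever $\|u_0\|_{L^{p_c^\beta}}$ is small enough, with $e^{-tH^\beta}u_0$ placed in $Y$ via Theorem \ref{mt}. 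Membership $u \in L^\infty([0,\infty), L^{p_c^\beta})$ is then recovered from the Duhamel identity by one more application of Theorem \ref{mt} with target exponent $p_c^\beta$, giving $T_{\max} = \infty$. The main obstacle is precisely this critical step: the loss of time-gain forces a purely scale-invariant analysis, and one must verify carefully that both the $Y$-contraction and the final $L^{p_c^\beta}$-control depend only on the smallness of $\|u_0\|_{L^{p_c^\beta}}$, with no hidden dependence on a time window.
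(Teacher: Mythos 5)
Your proposal is correct and follows essentially the same route as the paper: a Weissler-type contraction in a time-weighted auxiliary Lebesgue norm for local existence (the paper takes the auxiliary exponent to be exactly $p\gamma$ rather than a general $q\in(\max(p,\gamma),\gamma p)$, a cosmetic difference), the same quantitative re-running of the local argument at time $t$ for the lower blow-up rate, and the same scale-invariant weighted space $\sup_{t>0}t^{\delta}\|u(t)\|_{L^{r}}$ with $r$ strictly between $p_c^{\beta}$ and $\gamma p_c^{\beta}$ at criticality. The Beta-function bookkeeping you sketch matches the paper's computations.
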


Let us briefly recall the literature to better frame our results. Weissler \cite{weissler1980local} proved local wellposeness for \eqref{fHTEs} in $L^p$ for super-critical indices $p> p^1_c\geq 1$. Concerning the sub-critical regime $p< p_c^1$, there is no general theory of existence, see \cite{weissler1980local, brezis1996nonlinear}. Actually, Haraux-Weissler \cite{haraux1982non} proved that if  $1<p^1_c <  \gamma +1$ then there is a global solution of \eqref{fHTEs} (with zero initial data) in $L^p(\R^d)$ for $1\leq p < p^1_c$, but no such solution exists when $\gamma +1< p_c$. In the critical case where $p=p^1_c$ it is proved that the solution exists globally in time for small initial data. Some results in the same vein have been proved for the fractional    heat equation \eqref{fHTEs} by Miao, Yuan and Zhang in \cite[Theorem 4.1]{miao2008well}. 

\begin{Remark} Let us discuss some aspects of the previous results. In particular, we highlight some intriguing related problems that we plan to explore in future work. 
\begin{itemize}
    \item[-] The sign in power type non-linearity (focusing or defocusing) will not play any role in our analysis. Therefore, we have chosen to consider the defocusing case for the sake of concreteness.  
    
    \item[-] Using properties of Hermite functions and interpolation, in \cite[Theorem 1.6]{wong2005} Wong proved that  $\|e^{-tH}f\|_{L^2(\R)} \lesssim (\sinh t)^{-1}\|f\|_{L^p(\R)}$ for $t>0$ and $1\leq p \leq 2$. We note that Theorem \ref{mt} recaptures and improves Wong's result. 
    
    \item[-] It is known that \eqref{fHTEs} is ill-posed on Lebesgue spaces in the sub-critical regime,  see  \cite{haraux1982non}. There is reason to believe that the same conclusion holds for \eqref{fHTE}. However, a thorough analysis of this problem is beyond the scope of this note.
    
    \item[-] It is expected that Theorem \ref{mt} could be useful in dealing with other types of non-linearities in \eqref{fHTE}, such as exponential and inhomogeneous type non-linearity (which are also extensively studied in the literature). 
  
    \item[-] In Section \ref{cr} we discuss another application of Theorem \ref{mt}, namely Strichartz estimates for the fractional heat semigroup. Our approach here relies on a standard technique (i.e., $TT^{\star}$ method and  real  interpolation), whereas a refined phase-space analysis of $H^\beta$ is expected to reflect into better estimates.  
      \end{itemize}
\end{Remark}


\section{Preliminary results}\label{pki}

\noindent \textbf{Notation.} The symbol $X \lesssim Y$ means that the underlying inequality holds with a suitable positive constant factor: 
\begin{equation*} X \lesssim Y \quad\Longrightarrow\quad\exists\, C>0\,:\,X \le C Y. \end{equation*}

\subsection{On the fractional harmonic oscillator $H^{\beta}$}\label{fhm} Let us briefly review some facts concerning the spectral decomposition of the Hermite operator $H=-\Delta + |x|^2$ on $\rd$. 

Let $\Phi_{\alpha}(x)$, $\alpha \in \mathbb N^d$, be the normalized $d$-dimensional Hermite functions, that is
\[ \Phi_\alpha(x) = \Pi_{j=1}^d  h_{\alpha_j}(x_j), \quad h_k(x) = (\sqrt{\pi}2^k k!)^{-1/2} (-1)^k e^{\frac{1}{2}x^2}  \frac{d^k}{dx^k} e^{-x^2}.\]
The Hermite functions $ \Phi_\alpha $ are eigenfunctions of $H$ with eigenvalues $(2|\alpha| + d)$,  where $|\alpha |= \alpha_{1}+ ...+ \alpha_d$. Moreover, they form an orthonormal basis of $ L^2(\R^d)$. The spectral decomposition of $ H $ is thus given by
\[ H = \sum_{k=0}^\infty (2k+d) P_k, \qquad P_kf = \sum_{|\alpha|=k} \langle f,\Phi_\alpha\rangle \Phi_\alpha, \] 
where $\langle\cdot, \cdot \rangle $ is the inner product in $L^2(\mathbb{R}^d)$. 

In general, given a bounded function $m \colon \bN \to \bC$, the spectral theorem allows us to define the operator $m(H)$ such that
\[ m(H)f= \sum_{\alpha \in \mathbb N^d} m(2|\alpha| +d) \langle f, \Phi_{\alpha} \rangle \Phi_{\alpha} = \sum_{k=0}^\infty m(2k+d)P_kf, \quad f \in L^2(\rd).\] 
In view of the Plancherel theorem for the Hermite expansions, $m(H)$ is bounded on $L^{2}(\mathbb R^d)$. We refer to \cite{thangavelu1993lectures} for further details, in particular for H\"ormander multiplier-type results for $m(H)$ on $L^p(\R^d)$.
\subsection{Some relevant function spaces}
For the benefit of the reader we review some basic facts of time-frequency analysis -- see for instance \cite{grochenig2013foundations,cordero2020time, KassoBook} for comprehensive treatments. 

Recall that the short-time Fourier transform  of a temperate distribution $f \in \mathcal{S}'(\R^d)$ with respect to a window function $0\neq g \in {\mathcal S}(\R^d)$ (Schwartz space) is defined by
\begin{eqnarray*}\label{stft}
V_{g}f(x,\xi)= \la f,g \ra = \int_{\mathbb R^{d}} f(t) \overline{g(t-x)} e^{-2\pi i \xi \cdot t}dt,  \  (x, \xi) \in \mathbb R^{2d},
\end{eqnarray*} where the brackets $\la \cdot,\cdot \ra$ denote the extension to $\cS'(\rd)\times \cS(\rd)$ of the $L^2$ inner product. 

Modulation spaces, introduced by Feichtinger \cite{Feih83}, have proved to be extremely useful in a wide variety of contexts, ranging from analysis of PDEs to mathematical physics -- among the most recent contributions, see e.g., \cite{dias_22, manna_22, bhimani_19, NT_jmp,fei_21}. Modulation spaces are defined as follows. For $1 \leq p,q \leq \infty$  we have 
 \[ M^{p,q}(\R^d)= \left\{ f \in \mathcal{S}'(\R^d): \|f\|_{M^{p,q}} \coloneqq \left\| \|V_gf(x,\xi)\|_{L^p_x}  \right\|_{L_\xi^q}< \infty\right\} . \]

We recall from \cite[Theorem 1.1]{bhimani2020phase} some bounds for the fractional heat semigroup on modulation spaces. 
\begin{theorem} \label{BMNTT} Let $\beta>0$, $0< p_1,p_2,q_1,q_2\leq\infty$ and set
\[
 \frac{1}{\tilde{p}}\coloneqq \max\Big\{\frac{1}{p_2}-\frac{1}{p_1},0\Big\},\ \ \frac{1}{\tilde{q}}\coloneqq \max\Big\{\frac{1}{q_2}-\frac{1}{q_1},0\Big\},\quad \sigma_\beta \coloneqq  \frac{d}{2\beta} \Big(\frac{1}{\tilde{p}}+\frac{1}{\tilde{q}}\Big).
 \] Then
\begin{equation*}
\|e^{-tH^\beta} f\|_{M^{p_2,q_2}} \le \begin{cases}
C e^{-td^\beta} \|f\|_{M^{p_1,q_1}} & \text{if} \quad t\geq 1\\
C t^{-\sigma_\beta} \|f\|_{M^{p_1,q_1}} & \text{if} \quad 0<t\leq 1, 
\end{cases}
\end{equation*} where $C>0$ is a universal constant.
\end{theorem}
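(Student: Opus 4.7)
The plan is to leverage the fact that the Hermite semigroup $e^{-tH}$ has a highly explicit phase-space structure (via Mehler's formula), and then bootstrap to general $H^\beta$ by two different mechanisms depending on whether $0<\beta\leq 1$ or $\beta>1$. The first step is the base case $\beta=1$. Choosing the standard Gaussian window $g_0(x)=\pi^{-d/4}e^{-|x|^2/2}$, one can compute the short-time Fourier transform of $e^{-tH}f$ explicitly: using the Mehler kernel (or, equivalently, the metaplectic representation of the symplectic rotation generated by $H$) one obtains an identity of the form
\[
V_{g_0}(e^{-tH}f)(z) \;=\; e^{-td}\, \bigl(K_t \ast V_{g_0}f\bigr)\!\bigl(R_t z\bigr), \qquad z\in\R^{2d},
\]
where $R_t$ is an orthogonal rotation of phase space and $K_t$ is a Gaussian whose spread is controlled by $\sinh(t)$ and $\tanh(t)$. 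The prefactor $e^{-td}$ encodes the ground-state energy $d$ and immediately produces the exponential large-time decay; Young's inequality for mixed-norm Lebesgue spaces applied to the convolution with $K_t$ yields the small-time singularity $t^{-\sigma_1}$, with the correct exponent because $\|K_t\|_{L^r(\rdd)}$ scales like $t^{-d(1-1/r)}$ for small $t$.

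For $0<\beta<1$ I would invoke Bochner's subordination:
\[
e^{-tH^\beta}f \;=\; \int_0^\infty e^{-sH}f \, \eta_{t,\beta}(s)\, ds,
\]
with $\eta_{t,\beta}$ the one-sided stable density of order $\beta$. Taking the modulation-space norm and applying Minkowski's integral inequality together with the bound from the first step gives
\[
\|e^{-tH^\beta}f\|_{M^{p_2,q_2}} \;\lesssim\; \|f\|_{M^{p_1,q_1}}\,\int_0^\infty \min\!\bigl(e^{-sd}, s^{-\sigma_1}\bigr)\,\eta_{t,\beta}(s)\,ds.
\]
The scaling $\eta_{t,\beta}(s)=t^{-1/\beta}\eta_{1,\beta}(st^{-1/\beta})$, the exponential tail of $\eta_{1,\beta}$ at infinity, and standard Laplace-method estimates convert the right-hand side into $e^{-td^\beta}$ for $t\geq 1$ and $t^{-\sigma_\beta}$ for $0<t\leq 1$, which are exactly the desired bounds.

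For $\beta>1$ subordination is unavailable, and here I would turn to Shubin pseudodifferential calculus. The Weyl symbol of $H^\beta$ lies in the Shubin class $G^{2\beta}$, so a standard parametrix construction represents $e^{-tH^\beta}$ as a Weyl operator whose symbol is essentially $e^{-t(|x|^2+|\xi|^2)^\beta}$ multiplied by correction terms controlled in suitable seminorms. Since modulation spaces are invariant under (and are the natural setting for) Weyl operators with symbols enjoying uniform Gaussian decay on phase space — by Gröchenig's and Sjöstrand-type boundedness theorems — one extracts both the exponential large-time decay (from the principal-symbol Gaussian) and the polynomial short-time singularity (by rescaling the symbol). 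The main obstacle I expect is precisely this last step: constructing a parametrix for $e^{-tH^\beta}$ uniform in $t\in(0,\infty)$ and sharp enough to recover the exponent $\sigma_\beta$ (rather than a weaker one) requires careful bookkeeping of the Weyl-symbol seminorms, whereas Steps 1 and 2 are largely mechanical once the Mehler identity is in hand.
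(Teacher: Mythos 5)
First, a point of context: this paper does not prove Theorem \ref{BMNTT} at all --- it is recalled verbatim from \cite[Theorem 1.1]{bhimani2020phase}, so there is no internal proof to compare against. Judging your proposal on its own merits, the overall architecture (explicit phase-space analysis of $e^{-tH}$, Bochner subordination for $0<\beta<1$, Shubin parametrix for $\beta>1$) is the right one and matches the authors' program. But your Step 1 contains a genuine error that propagates through everything else. The STFT of $e^{-tH}f$ against a Gaussian window is \emph{not} of the form $e^{-td}\,(K_t\ast V_{g_0}f)(R_tz)$ with $R_t$ orthogonal. Passing through the Bargmann transform, $e^{-tH}=e^{-td}e^{-tN}$ with $N$ the number operator, and $e^{-tN}$ acts on Fock space by the substitution $F(z)\mapsto F(e^{-t}z)$; translating back, one finds
\[
|V_{g_0}(e^{-tH}f)(z)|=e^{-td}\,e^{-c(1-e^{-2t})|z|^2}\,|V_{g_0}f(e^{-t}z)|,
\]
i.e.\ a Gaussian \emph{multiplication} composed with a phase-space \emph{contraction}, not a convolution composed with a rotation. (A quick sanity check: as $t\to\infty$, $e^{-tH}f\to e^{-td}\langle f,\Phi_0\rangle\Phi_0$, so $V_{g_0}(e^{-tH}f)$ must converge to $e^{-td}V_{g_0}f(0)$ times a fixed Gaussian; no convolution of $V_{g_0}f$ against an approximate identity can produce that.) This is not a cosmetic slip, because the mechanism you invoke --- Young's inequality with $\|K_t\|_{L^r}\sim t^{-d(1-1/r)}$ and $1/r=1+1/p_2-1/p_1$ --- yields a singularity $t^{-d(1/p_1-1/p_2)}$ precisely when $p_2\ge p_1$, which is the regime where the theorem asserts a \emph{uniform} bound ($1/\tilde p=0$), and Young is simply inapplicable when $p_2<p_1$, which is the only regime where the singularity $t^{-\sigma_\beta}$ actually occurs. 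The correct mechanism is H\"older's inequality against the Gaussian damping factor, whose $L^{s}$ norm in each $d$-dimensional block scales like $t^{-d/(2s)}$ for small $t$, producing exactly $t^{-\frac d2(1/\tilde p+1/\tilde q)}$; the case $p_2\ge p_1$, $q_2\ge q_1$ then follows from the monotone inclusions of modulation spaces at no cost.

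Two further gaps worth flagging. The statement covers the quasi-Banach range $0<p_i,q_i\le\infty$, where both Young's and Minkowski's integral inequalities fail, so Steps 1 and 2 as written do not cover it. And in Step 2 the estimate $\int_0^1 s^{-\sigma_1}\eta_{t}(s)\,ds\lesssim e^{-td^\beta}$ for $t\ge1$ relies on the stretched-exponential decay of the stable density $\eta_{1,\beta}$ \emph{at the origin}; its tail at infinity is polynomial, $\sim u^{-1-\beta}$, not exponential as you claim, so the justification cites the wrong asymptotics even though the conclusion is salvageable. Step 3 is indeed the hard case and is essentially the route taken in \cite{bhimani2020phase} (and in Section \ref{s3} of this paper for the Lebesgue analogue), but as presented it remains a sketch.
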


We briefly recall some properties of the Shubin classes $\Gamma^s$, which play a central role as symbol classes in the theory of pseudodifferential operators -- we refer to \cite{NicolaBook} for additional details. For $s\in\mathbb{R}$ we define $\Gamma^s$ as the space of functions $a\in C^\infty(\rdd)$ satisfying the following condition: for every $\tilde{\alpha} \in \mathbb{N}^{2d}$ there exists $C_{\tilde{\alpha}}>0$ such that
\[
|\partial^{\tilde{\alpha}} a(z)|\leq C_{\tilde{\alpha}} (1+|z|)^{s-|\tilde{\alpha}|},\qquad z\in\rdd,
\]
This space becomes a Fr\'echet space endowed with the obvious seminorms. 

It is important for our purposes to recall that the fractional Hermite propagator is a pseudodifferential operator with symbol in a suitable Shubin class, as proved in \cite[Proposition 2.3]{bhimani2020phase}.
\begin{proposition} \label{prop shubin}
	Let $\beta >0$. The fractional Hermite operator $H^{\beta} = (-\Delta+|x|^2)^{\beta}$ is a pseudodifferential operator with Weyl symbol $a_{\beta} \in \Gamma^{2\beta}$. More precisely, we have
	\begin{equation}\label{eq prop shubin}
	a_\beta(x,\xi) = (|x|^2+|\xi|^2)^\beta + r(x,\xi), \quad |x|+|\xi|\ge 1,
	\end{equation}  where $r \in \Gamma^{2\beta-2}$.
\end{proposition}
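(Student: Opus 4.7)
The plan is to invoke the Shubin calculus of complex powers of positive, globally elliptic self-adjoint pseudodifferential operators, which is for instance developed in \cite[Section 4.5]{NicolaBook}. First I would observe that $H = -\Delta + |x|^2$ is the Weyl quantization of $h(x,\xi) = |x|^2 + |\xi|^2$ (in the normalization used throughout the paper). Clearly $h \in \Gamma^2$, and it is $\Gamma$-elliptic since $h(z) \geq \tfrac{1}{2}(1+|z|)^2$ for $|z|\geq 1$. Combined with the self-adjointness of $H$ on $L^2(\rd)$ and the positivity of its spectrum $\{2k+d : k \in \bN\}$, this places $H$ exactly within the scope of the Shubin complex-powers construction, guaranteeing that $H^\beta$ is a Weyl pseudodifferential operator with symbol $a_\beta \in \Gamma^{2\beta}$.

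To pin down the principal part, I would write
\[
H^\beta = \frac{i}{2\pi} \int_\Gamma \lambda^\beta (\lambda - H)^{-1}\, d\lambda
\]
as a Dunford--Riesz integral on a contour $\Gamma$ enclosing $\mathrm{spec}(H) \subset (0,\infty)$ and avoiding the branch cut of $\lambda^\beta$ on $(-\infty,0]$. Within the Shubin calculus, the resolvent $(\lambda - H)^{-1}$ is represented by a Weyl symbol $r(\lambda;\cdot)$ admitting an asymptotic expansion
\[
r(\lambda;x,\xi) \sim \frac{1}{h(x,\xi)-\lambda} + \sum_{j\geq 1} r_j(\lambda;x,\xi),
\]
with $r_j$ of the form (polynomial in derivatives of $h$)$/(h-\lambda)^{k}$, so that $r_j \in \Gamma^{-2-2j}$ with estimates uniform in $\lambda$ along $\Gamma$. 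Integrating term by term, the leading contribution $\frac{i}{2\pi}\int_\Gamma \lambda^\beta/(\lambda - h)\,d\lambda$ equals $h^\beta$ wherever $h > 0$, while elementary residue calculus shows that the subsequent $r_j$ contribute a symbol in $\Gamma^{2\beta-2}$.

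The main obstacle I foresee concerns the behavior near the origin in phase space: for non-integer $\beta$, the function $(|x|^2+|\xi|^2)^\beta$ is only H\"older continuous at $0$ and cannot coincide globally with the smooth Weyl symbol $a_\beta \in C^\infty(\rdd)$; this is exactly why \eqref{eq prop shubin} is asserted only in the region $|x|+|\xi|\geq 1$. To make the parametrix construction rigorous, one fixes a cutoff $\chi \in C^\infty_c(\rdd)$ equal to $1$ near the origin and works with $(1-\chi(z))(h(z) - \lambda)^{-1}$ in place of $(h(z)-\lambda)^{-1}$; the discrepancy is regularizing and does not affect the identity outside the support of $\chi$. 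The remaining bookkeeping---uniform $\Gamma$-seminorm estimates along $\Gamma$ and the convergence of the asymptotic expansion in the Fr\'echet topology of $\Gamma^{2\beta}$ modulo $\Gamma^{-\infty}$---is standard and can be imported directly from \cite{NicolaBook}.
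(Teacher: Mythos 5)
The paper does not actually prove this proposition --- it is recalled verbatim from \cite[Proposition 2.3]{bhimani2020phase} --- and your route (global ellipticity of $h(x,\xi)=|x|^2+|\xi|^2$ in $\Gamma^2$, complex powers via a Dunford--Riesz integral, resolvent parametrix with $\lambda$-uniform Shubin estimates, a cutoff near the origin to account for the non-smoothness of $h^\beta$) is exactly the standard Seeley-type argument on which that reference and \cite{NicolaBook} rely, so the approach matches. The one point to handle with care is that the contour integral $\int_\Gamma \lambda^\beta(\lambda-H)^{-1}\,d\lambda$ does not converge for $\beta\ge 1$ as written; one must first define $H^{z}$ for $\operatorname{Re}z<0$ (or insert extra resolvent factors) and then reach $\beta>0$ by composing with integer powers, a regularization that the cited machinery carries out and that does not affect your identification of the principal part or of the $\Gamma^{2\beta-2}$ remainder.
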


We also recall some facts concerning the so-called Shubin-Sobolev (also known as Hermite-Sobolev) spaces $Q^s$, $s\in\mathbb{R}$ -- see \cite{shubin1987pseudodifferential}, \cite[Theorem \ 2.1]{todor} for further details. In particular, $Q^s$ is the space of $f\in\cS'(\rd)$ such that 
\begin{equation*}\label{eq char shusob}
\|f\|^2_{Q^s}\coloneqq \|H^{s/2}f\|^2_{L^2}=\sum_{k=0}^{\infty} ||P_k f||^2_{L^2}(2k+d)^{s}<\infty.
\end{equation*}
In view of the characterisation $Q^s=M^{2,2}_{v_s}$ (see for instance \cite[Lemma 4.4.19]{cordero2020time}), H\"older's inequality and the inclusion relations of Shubin-Sobolev spaces (see e.g., \cite[Theorem 2.4.17]{cordero2020time}), it is well known that, for every $1\leq p,q\leq\infty$, if $s$ is large enough,
\begin{equation*}\label{eq emb shusob}
Q^s\hookrightarrow M^{p,q}\hookrightarrow M^\infty\hookrightarrow Q^{-s}.
\end{equation*}  

\section{Proof of Theorem \ref{mt}} \label{s3}
\subsection{Proof of Part \eqref{P1}}
It is known that 
\[
L^p \hookrightarrow M^{p,\infty} \text{ and } M^{q,1} \hookrightarrow L^q \text{ for } 1\leq p,q \leq \infty,
\] see e.g., \cite{cordero2020time,sugimoto2011remarks}. In light of this embedding and Theorem \ref{BMNTT},  for $t>1$ we obtain the desired estimate
\[ \|e^{-tH^\beta} f\|_{L^q}\lesssim e^{-td^\beta}\|f\|_{L^p}, \quad \forall p, q \in [1, \infty]. \]

Let us consider now the case where  $0<t \leq 1$. In view of Proposition \ref{prop shubin} we think of $H^{\beta}$ as a pseudodifferential operator with Weyl symbol $a_{\beta} \in \Gamma^{2\beta}$, where
	\begin{equation*}
	a_\beta(x,\xi) = (|x|^2+|\xi|^2)^\beta + r(x,\xi), \quad |x|+|\xi|\ge 1,
	\end{equation*}  
	for a suitable $r \in \Gamma^{2\beta-2}$. We may further rewrite 
	\begin{equation*}
	a_\beta(x,\xi) = a(x, \xi) + r'(x,\xi), \quad x, \xi \in \mathbb{R}^d,
	\end{equation*} 
	for some $r' \in \Gamma^{2\beta-2}$, where $a\in \Gamma^{2\beta}$ satisfies
	\begin{align} \label{modifiedsym}
	a(x,\xi) \ge (1+|x|+|\xi|)^{2\beta}, \quad x, \xi \in \mathbb{R}^d.
	\end{align}
Note that the same conclusion holds for the Kohn-Nirenberg symbol of $H^{\beta}$ (see \cite[Proposition 1.2.9]{NicolaBook}). Therefore, we assume hereinafter that the above functions $a(x,\xi),~r'(x,\xi)$ denote the Kohn-Nirenberg symbol of the corresponding operators.

It follows from \cite[Theorem 4.5.1]{NicolaBook} that the heat semigroup $e^{-tH^\beta}$ has a Kohn-Nirenberg symbol with the following structure\footnote{Note that the mentioned result is stated for the Weyl quantization, but again one can easily check that the same conclusion holds for the Kohn-Nirenberg quantization.}: 
\[
b_t(x,\xi)=e^{-t a(x,\xi)}+e^{-t a(x,\xi)} \sum_{j=1}^{J-1} \sum_{l=1}^{2j} t^l u_{l,j}(x,\xi) + r_t^{''}(x,\xi),
\]
where $J\ge 1$ is arbitrarily chosen, $u_{l,j} \in \Gamma^{2\beta l-2j}$ and $r_t^{''}$ satisfy
\[\big|\partial_x^{\alpha} \partial_{\xi}^{\gamma}r_t^{''}(x,\xi)\big| \leq C_{\alpha, \gamma} (1+|x|+|\xi|)^{-2J-|\alpha|-|\gamma|}\]
for a constant $C_{\alpha, \gamma} $ independent of $t \in (0,1),$ for  every $\alpha, \gamma \in \mathbb{N}^{d}$.

Since $r_t^{''}(x, D)  \colon Q^{-J} \to Q^J,$  for $J$ large enough,  we have 
\[\|r_t^{''}(x, D) f\|_{L^q} \leq C \|f\|_{L^p}.\]

Let us focus now on the symbol 
\[C_t(x,\xi)\coloneqq  e^{-t a(x,\xi)} \sum_{j=1}^{J-1} \sum_{l=1}^{2j} t^l u_{l,j}(x,\xi).\]
By virtue of the Leibniz rule, the chain rule and  \eqref{modifiedsym}, one can verify the estimates 
\begin{align} \label{symbolcal}
\big|\partial_x^{\alpha} \partial_{\xi}^{\gamma}[e^{\frac t4 \langle x \rangle ^{2\beta}} C_t(x,\xi)]\big| \leq C_{\alpha, \gamma} (1+|\xi|)^{-|\gamma|},
\end{align}
where $\langle \cdot \rangle = (1+ |\cdot|^2)^{1/2}$. In fact, it suffices to observe that $\partial_x^{\alpha}e^{\frac t4 \langle x\rangle^{2\beta}}$ is a finite linear combination of terms of the type 
\[
e^{\frac t4 \langle x\rangle^{2\beta}}~~\partial^{\alpha_1} [t \langle x\rangle^{2\beta}] \cdots \partial^{\alpha_k} [t \langle x\rangle^{2\beta}],
\]
with $\alpha_1 +\cdots+\alpha_k=|\alpha|$, so that 
\[
\big|\partial_x^{\alpha}e^{\frac t4 \langle x\rangle^{2\beta}}\big| \leq e^{\frac t42\langle x\rangle^{2\beta}} ~\langle x\rangle^{-|\alpha|}. 
\]
Similarly, since $a \in \Gamma^{2\beta}$ satisfies \eqref{modifiedsym}, we have
\[
\big|\partial_x^{\alpha} \partial_{\xi}^{\gamma}a(x,\xi)\big| \leq a(x,\xi) \,  (1+|x|+|\xi|)^{-|\alpha|-|\gamma|},
\]
so that, arguing as above,
\[
\big|\partial_x^{\alpha} \partial_{\xi}^{\gamma}e^{-t a(x,\xi)}\big| \leq e^{-\frac t2 a(x,\xi)} \,  (1+|x|+|\xi|)^{-|\alpha|-|\gamma|},
\]
hence we infer
\[
\big|\partial_x^{\alpha} \partial_{\xi}^{\gamma}[t^l \, u_{l,j}(x,\xi)]\big| \leq  t^l \,  a(x,\xi)^l \,  (1+|x|+|\xi|)^{-2j-|\alpha|-|\gamma|}.
\]
The claimed bound thus follows by the Leibniz rule.

To summarize, for every $p \in (1,\infty)$ we have
\[\|e^{\frac t4 \langle x\rangle^{2\beta}} C_t(x,D)f\|_{L^p} \leq \|f\|_{L^p},\quad 0<t<1,\]
by the $L^p$ boundedness of pseudodifferential operators with symbol in H\"{o}rmander's class $S_{1,0}^0$ -- see for instance \cite[Proposition 4, p. 250]{stein1993harmonic}. 
For  $1\leq q \leq p \leq \infty$ we have, by H\"{o}lder inequality,
\[
\|e^{-\frac t4 \langle x\rangle^{2\beta}} f\|_{L^q} \leq C t^{ \frac{d}{2\beta} \left(\frac{1}{q}-\frac{1}{p}\right)} \|f\|_{L^p}.
\]
Hence we obtain, for $1\leq q \leq \infty,~1<p<\infty,~q\le p$,
\[
\|C_t(x, D) f\|_{L^q} \leq C t^{ \frac{d}{2\beta} \left(\frac{1}{q}-\frac{1}{p}\right)} \|f\|_{L^p}, \quad 0<t<1.
\]

On the other hand, we also have 
\begin{equation*} \label{zerosymb}
\big|C_t(x,\xi)\big| \leq C e^{-\frac t2 |\xi|^{2\beta}},\quad 0<t<1,
\end{equation*}
and the integral kernel of the operator $C_t(x,D)$ given by
\[
K(x,y)=(2 \pi)^{-d} \int_{\mathbb{R}^d} e^{i(x-y) \cdot \xi} C_t(x,\xi) \, d\xi
\]
is readily seen to satisfy 
\[
\big|K(x,y)\big| \leq C t^{- \frac{d}{2\beta}}.
\]
This gives the desired continuity result $L^1 \to L^{\infty}$, while the remaining bounds follow by interpolation with the above  $L^p \to L^q$ estimates.

\begin{Remark}
Note that some endpoint cases can be obtained in a straightforward way. For instance, from $L^1 \to L^{\infty}$ continuity we also obtain $L^1 \to L^2$ bounds as follows: if $f\in L^2(\rd)$ then
\[
\big|\langle e^{-t H^{\beta}}f, e^{-t H^{\beta}}f \rangle  \big|=\big|\langle e^{-2t H^{\beta}}f, f \rangle  \big| \le C t^{-\frac{d}{2\beta}} \|f\|_{L^1}^2
\]
so that
\[
\|e^{-t H^{\beta}}f\|_{L^2} \leq C t^{-\frac{d}{4\beta}} \|f\|_{L^1}, \quad 0<t<1.
\]
By interpolation with $L^1 \to L^{\infty}$ one also gets the desired estimate $L^1 \to L^q$ for $2\leq q \leq \infty$.
\end{Remark}

\begin{Remark}
Some endpoint cases (e.g., if $p,q \in \{1,\infty\}$) are not covered in the results above. A deeper investigation of the kernel $K(x,y)$ of $C_t(x,D)$ could likely give some result in this connection (for example $L^1 \to L^1,~~L^{\infty} \to L^{\infty}$), but it will not be essential for the applications to the nonlinear problem in Theorem \ref{LWL}. Nevertheless, the dispersive estimate $L^1 \to L^{\infty}$ is covered.
\end{Remark}

\subsection{Proof of Part \eqref{P2}}
 In order to prove the second claim in Theorem \ref{mt}, some preparatory work is needed. First, we recast $e^{-tH}$ as the Weyl transform of a function on $\mathbb{C}^d$, which allows us to think of $e^{-tH}$ as a pseudodifferential operator. 
 
 Recall that the Weyl transform $W(F)$ of a function $F \colon \bC^d \to \bC$ is defined by
\[ W(F)\phi(\xi)=(2\pi)^{-d} \int_{\R^d}\int_{\R^d} e^{i(\xi-\eta)\cdot y} b\left(\frac{\xi+\eta}{2},y\right) \, \phi(\eta) \, dy d\eta, \]
 for $\phi \in L^2(\R^d)$, where the symbol $b(\xi, \eta)$ is the full inverse Fourier transform of $F$ in both variables. In particular, the Weyl transform $W(F)$ is a pseudodifferential operator in the Weyl calculus with symbol $b$. 
 
Let us highlight that the Weyl symbol of the Hermite semigroup $e^{-tH}$ is given by the function $a_t(x, \xi) =C_d(\cosh t)^{-d} \, e^{-(\tanh t)(|x|^2+|\xi|^2)}$, see \cite{thangavelu2018note}.  Thus,
 \begin{equation*} 
 e^{-tH}f(x) = C_d (\cosh t)^{-d}(2\pi)^{-d} \underset{=I}{\underbrace{\int_{\R^d}\int_{\R^d} e^{i(x-\eta)\cdot y} \, e^{-(\tanh t)|y|^2} \, e^{-(\tanh t)(|\frac{x+\eta}{2}|^2)} \, f(\eta) \, dy d\eta}}.
 \end{equation*}
In order to  bound the above integral $I$, we first recast the latter expression in terms of convolution. Recall that the Fourier transform of the Gaussian function $f(y)= e^{-\pi a |y|^2}$ with $a>0$ is given by $\hat{f}(x)= a^{-d/2} e^{-\pi |x|^2/a}$, and  note that 
\[\frac{|x-\eta|^2}{4} - \frac{|x|^2}{2} - \frac{|\eta|^2}{2} = - \frac{|x + \eta|^2}{4}. \]
As a result, we have 
\begin{eqnarray*}
\left(\tanh t \right) ^{d/2} I 
 =    \int_{\rd} \, e^{-(\frac{1}{4  \tanh t}-\frac{\tanh t}{4}) |x- \eta|^2} e^{- \frac{\tanh t}{2} (|x|^2 + |\eta|^2) } f(\eta) \, d\eta
 =   e^{-\frac{\tanh t}{2} |x|^2}  \left( e^{- \frac{1}{2\sinh 2t} |\cdot |^2} \ast g\right) (x),
\end{eqnarray*}
where we set $g(\cdot)= e^{-\frac{\tanh t}{2}|\cdot|^2} \, f(\cdot)$.  Note that $(\cosh t)^{-d} \left(\tanh t \right) ^{-d/2}=  \left( \sinh (2t)  \right)^{-d/2}$, hence 
\begin{eqnarray}\label{re}
e^{-tH}f(x)= \tilde{C}_d \left( \sinh (2t)  \right)^{-d/2} e^{-\frac{\tanh t}{2} |x|^2}  \left( e^{- \frac{1}{2\sinh 2t} |\cdot |^2} \ast g\right) (x).
\end{eqnarray}

\begin{Lemma} \label{dc} Let $1\leq  p,q  \leq \infty$ and $t>0$. Then 
\[ \|e^{-tH} f\|_{L^q} \leq C   (\tanh t)^{-\frac{d}{2} \left|\frac{1}{q} - \frac{1}{p}\right| } \, \|f\|_{L^{p}},\] for some constant $C>0$ that depends only on $d$. 
\end{Lemma}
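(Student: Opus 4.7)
My plan is to work directly from the explicit representation \eqref{re}, which displays $e^{-tH}f(x)$ as the product of the scalar prefactor $\tilde C_d(\sinh 2t)^{-d/2}$, the Gaussian weight $e^{-\frac{\tanh t}{2}|x|^2}$, and the convolution $(e^{-\frac{1}{2\sinh 2t}|\cdot|^2}*g)(x)$, where $g(y)=e^{-\frac{\tanh t}{2}|y|^2}f(y)$ satisfies $|g|\le|f|$ pointwise and in particular $\|g\|_{L^p}\le\|f\|_{L^p}$. The two natural tools are Young's convolution inequality and H\"older's inequality, and I would split the argument into the cases $q\ge p$ and $q\le p$, depending on which of the two factors is responsible for the change of Lebesgue exponent.

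For $q\ge p$, I would bound $e^{-\frac{\tanh t}{2}|x|^2}\le 1$ and apply Young's inequality with exponent $1/r=1+1/q-1/p\in[0,1]$. An elementary Gaussian integral gives
\[
\|e^{-\frac{|\cdot|^2}{2\sinh 2t}}\|_{L^r}=C_r\,(\sinh 2t)^{d/(2r)},
\]
so multiplying by the prefactor $(\sinh 2t)^{-d/2}$ yields a bound of the form $C\,(\sinh 2t)^{-\frac{d}{2}\left(\frac 1p-\frac 1q\right)}\|f\|_{L^p}$. For $q\le p$, I would instead apply H\"older's inequality with $1/q=1/s+1/p$, estimating the Gaussian weight in $L^s$ and the convolution in $L^p$. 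The $L^p\to L^p$ bound of the convolution via Young's inequality with an $L^1$ kernel costs a factor $\|e^{-\frac{|\cdot|^2}{2\sinh 2t}}\|_{L^1}=(2\pi\sinh 2t)^{d/2}$, which exactly cancels the prefactor $(\sinh 2t)^{-d/2}$, while the Gaussian weight contributes $\|e^{-\frac{\tanh t}{2}|\cdot|^2}\|_{L^s}=C_s(\tanh t)^{-d/(2s)}=C_s(\tanh t)^{-\frac d2\left(\frac 1q-\frac 1p\right)}$, producing the target estimate.

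Finally, to express both cases uniformly in terms of $\tanh t$, I would invoke the elementary comparison $\sinh 2t=2\sinh t\cosh t\ge\tanh t$ (equivalent to $2\cosh^2 t\ge 1$), which lets one replace $(\sinh 2t)^{-\alpha}$ by a constant multiple of $(\tanh t)^{-\alpha}$ for every $\alpha\ge 0$; applied to the Case~1 estimate this yields exactly the stated bound. The main obstacle is really just the bookkeeping of the Gaussian $L^r$ and $L^s$ norms against the $(\sinh 2t)^{-d/2}$ prefactor, along with the observation that only this mild comparison between $\sinh 2t$ and $\tanh t$ is needed to unify the two cases; the endpoint exponents $p,q\in\{1,\infty\}$ are covered without extra work by admitting $r,s\in\{1,\infty\}$ in the two estimates.
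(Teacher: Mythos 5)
Your proof is correct, but for the case $p<q$ it takes a genuinely different route from the paper. The paper also starts from the representation \eqref{re} and handles $1\leq q\leq p\leq\infty$ exactly as you do (H\"older on the outer Gaussian in $L^s$ with $1/s=1/q-1/p$, Young with the $L^1$ kernel on the convolution, the $(\sinh 2t)^{d/2}$ factors cancelling). For $1<p<q<\infty$, however, the paper instead writes out the Mehler kernel $K_t(x,y)=c_d(\sinh 2t)^{-d/2}e^{-\frac14(\coth t)|x-y|^2}e^{-\frac14(\tanh t)|x+y|^2}$, dominates it by $C(\cosh t)^{-d}(\tanh t)^{-\alpha/2}|x-y|^{\alpha-d}$ with $\alpha=d(1/p-1/q)$, and invokes the $L^p\to L^q$ boundedness of the Riesz potential; the endpoint $p=1$ is then treated separately via Young. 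Your argument replaces all of this by a single application of Young's inequality on \eqref{re} after discarding the outer Gaussian, using $\|e^{-|\cdot|^2/(2\sinh 2t)}\|_{L^r}=C_r(\sinh 2t)^{d/(2r)}$ with $1/r=1+1/q-1/p$ and the elementary comparison $\sinh 2t\geq\tanh t$. This is more elementary (no Hardy--Littlewood--Sobolev input), covers all endpoints $p,q\in\{1,\infty\}$ with $p\leq q$ uniformly, and in fact yields the slightly stronger bound $(\sinh 2t)^{-\frac d2(1/p-1/q)}$ for $p<q$. What the paper's kernel argument buys in exchange is the extra factor $(\cosh t)^{-d}$, i.e.\ full exponential decay as $t\to\infty$ independent of the gap $1/p-1/q$ -- but that refinement is not part of the statement of the lemma and is not needed where the lemma is applied, so your proof fully suffices.
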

\begin{proof}
 Using Mehler's formula for the Hermite functions  (see e.g., \cite{thangavelu1993lectures}), the kernel $ K_t(x,y) $ of the semigroup $ e^{-tH} $ is explicitly given by 
\[ K_t(x,y) = c_d (\sinh 2t)^{-d/2}  e^{-\frac{1}{4} (\coth t) |x-y|^2} e^{-\frac{1}{4} (\tanh t)|x+y|^2}. \]
For $ 1 < p < q< \infty, $ set $ \alpha = d (1/p-1/q)$. Then we have
\begin{multline*}
 K_t(x,y) \\
 =  c_d (\sinh 2t)^{-d/2} (\tanh t)^{(d-\alpha)/2} |x-y|^{\alpha-d}  ( (\coth t)|x-y|^2)^{(d-\alpha)/2}  e^{-\frac{1}{4} (\coth t) |x-y|^2} e^{-\frac{1}{4} (\tanh t)|x+y|^2},
\end{multline*}
from which we obtain the estimate  
\[ K_t(x,y) \leq C (\cosh t)^{-d}  (\tanh t)^{-\alpha/2} |x-y|^{\alpha-d}. \]
Since the Riesz potential 
$$ R_\alpha f(x) = c_\alpha \int_{\R^d} f(y) |x-y|^{\alpha-d} dy$$ 
is bounded from $ L^p $ to $ L^q $ for $1<p<q<\infty$, we get 
\[ \| e^{-tH} f\|_{L^q} \leq C (\cosh t)^{-d}  (\tanh t)^{-\alpha/2} \|f\|_{L^p} \] for $1 < p < q< \infty$. 

To prove the remaining cases, we use the identity \eqref{re}. We consider the case $ 1\leq q\leq p\leq \infty$ first. Set $\frac{1}{q} = \frac{1}{p} + \frac{1}{\tilde{q}} $ and note that 
\[ \|e^{- \frac{\tanh t}{2} |\cdot  |^2}\|_{L^{\tilde{q}}}  \sim   (\tanh t)^{-d/ 2\tilde{q}}= (\tanh t)^{\frac{d}{2} \left(\frac{1}{p} - \frac{1}{q} \right)}.\]
By \eqref{re} and invoking H\"{o}lder  and  Young's  inequalities,  we obtain
\begin{align*}
	\|e^{-tH}f\|_{L^{q}} & \lesssim   (\sinh 2t)^{-d/2} \, \|e^{- \frac{\tanh t}{2} |\cdot |^2}\|_{L^{\tilde{q}}} \, \|e^{- \frac{1}{2\sinh 2t} |\cdot |^2} \ast g \|_{L^{p}}\\
	& \lesssim (\sinh 2t)^{-d/2} \, (\tanh t)^{\frac{d}{2} \left(\frac{1}{p} - \frac{1}{q} \right)} \, \|e^{- \frac{1}{2\sinh 2t} |\cdot |^2}\|_{L^1} \| g \|_{L^p}\\
	& \lesssim (\tanh t)^{-\frac{d}{2} \left( \frac{1}{q} - \frac{1}{p} \right)} \| f \|_{L^p}.
\end{align*}
Let $1\leq q\leq \infty$ and note that 
\[ \|e^{- \frac{1}{2\sinh 2t} |\cdot |^2}\|_{L^{q}}  \approx   (\sinh (2t))^{d/ 2q}.\]
By \eqref{re} and Young inequality, we have 
\begin{align*}
\|e^{-tH}f\|_{L^{q}} & \lesssim (\sinh 2t)^{-d/2}  \|e^{- \frac{1}{2\sinh 2t} |\cdot |^2} \ast g \|_{L^{q}}\\
& \lesssim (\sinh 2t)^{-d/2} \|e^{- \frac{1}{2\sinh 2t} |\cdot |^2}\|_{L^{q}} \| g \|_{L^1}\\
& \lesssim (\sinh 2t)^{-\frac{d}{2} \left( 1- \frac{1}{q} \right)} \| f \|_{L^1}\\
& \lesssim (\cosh t)^{-d \left( 1- \frac{1}{q} \right)} \, (\tanh t)^{-\frac{d}{2} \left( 1- \frac{1}{q} \right)} \| f \|_{L^1}.
\end{align*}
This completes the proof.
\end{proof}

Note that Lemma \ref{dc} essentially gives the desired fixed-time estimate of Theorem \ref{mt} \eqref{P2} for $\beta=1$ -- see also Remark \ref{IrD} below. In order to deal with the case $0<\beta <1$, Bochner’s subordination formula and the property of probability density function (see \eqref{p2}) will play a crucial role. To be precise, Bochner’s subordination formula allows us to express the heat semigroup $e^{-t\sqrt{H}}$  in terms of solutions of the heat equation:
\begin{eqnarray}\label{ifh}
e^{-t\sqrt{H}}f(x)=\pi^{-1/2}\int_0^{\infty} e^{-y} \, e^{-\frac{t^2}{4y} H} f(x) \, y^{-1/2} \, dy, 
\end{eqnarray}
which ultimately follows from the identity
\[ e^{-a}=\pi^{-1/2}\int_0^{\infty} e^{-y} \, e^{-\frac{a^2}{4y}} \, y^{-1/2} \, dy \quad (a>0).\]
The Macdonald function $K_\nu(z)$ is defined, for $z > 0$, by
\[ K_{\nu}(z)=2^{-\nu-1} \, z^{\nu} \, \int_0^{\infty} e^{-y-\frac{z^2}{4y}} \, y^{-\nu-1} \, dy.\]
A straightforward change of variables shows that
\[ z^{\nu} K_{\nu}(z)=2^{\nu-1} \, \int_0^{\infty} e^{-y-\frac{z^2}{4y}} \, y^{\nu-1} \, dy=z^{\nu} K_{-\nu}(z). \]
Then $z^{\nu}K_\nu (z)$ converges to $2^{\nu-1} \Gamma(\nu)$ as $z \to 0$. Moreover,
it is known that $K_\nu (z)$ has exponential decay at infinity (see \cite{lebedevspecial}).
Consider now the Gaussian kernel of the form
\[ g_t(x)=(4\pi t)^{-d/2} \, e^{-\frac{|x|^2}{4t}},~t>0,~x\in \R^d.\]
We set $p_t(x,y)=p_t(x-y)$, where
\[ p_t(x)=\int_0^{\infty} g_s(x) \, \eta_t(s) \, ds,\]
$g_s$ is the Gaussian kernel defined above and $\eta_t\geq 0$ is the density function of the distribution of the $\beta$-stable subordinator at time $t$, see e.g., \cite{bogdan2009potential, bogdan2016hardy}. Therefore, $\eta_t(s)=0$ for $s\leq 0$ and, for $0<\beta<1$, we have
\begin{eqnarray}\label{p1}
\int_0^{\infty} e^{-us} \, \eta_t(s) \, ds=e^{-tu^{\beta}}, \quad  u\geq0.
\end{eqnarray}
The fractional heat semi group $e^{-tH^{\beta}}$ is thus given in terms of solutions of the heat equation:
\begin{eqnarray}\label{p2}
e^{-tH^{\beta}}f(x)=\int_0^{\infty} e^{-sH} f(x) \, \eta_t(s) \, ds.
\end{eqnarray}
We are now ready to complete the proof of Theorem \ref{mt}. 
\begin{proof}[\textbf{Proof of Theorem  \ref{mt} -- Part \eqref{P2}}]
The  case $t>1$ follows from the proof of Part \eqref{P1} of Theorem \ref{mt}, as it holds for all $p,q \in [1, \infty]$. We then assume $0<t\leq 1$ from now on. 
In view of the identity \eqref{p2} and Lemma \ref{dc} for the case $ \beta = 1 $, we obtain
\[ \| e^{-tH^\beta}f \|_{L^q} \leq C \left[\int_0^\infty   (\tanh s)^{-\alpha/2} \eta_t(s) ds\right]  \,  \|f\|_{L^p},\]
where we set $ \alpha = d| 1/p-1/q|$. Splitting the integral above into two parts, the integral taken over $ [1,\infty)$ is bounded by
\[  \int_0^\infty     \eta_t(s) ds  = 1.\]
The remaining  integral is bounded by 
\[  \int_0^\infty    s^{-\alpha/2} \eta_t(s) ds = \frac{1}{\Gamma(\alpha/2)}   \int_0^\infty  \Big( \int_0^\infty   e^{-us} u^{\alpha/2-1} du \Big)   \eta_t(s) ds.\]
Changing the order of integration, and using \eqref{p1}, for a suitable constant $C>0$ we obtain 
\[ \int_0^\infty    s^{-\alpha/2} \eta_t(s) ds \leq C \int_0^\infty u^{\alpha/2-1} e^{-t u^\beta} du.\]
Finally, the change of variables $ v = u^\beta $ gives the estimate 
\[ \int_0^\infty u^{\alpha/2-1} e^{-t u^\beta} du \leq C \int_0^\infty v^{(\alpha/2\beta) -1} e^{-tv} dv = C_{\alpha, \beta} t^{-(\alpha/2\beta)}.\]
This completes the proof for the case $0<t\leq 1$.
\end{proof}

\begin{Remark} \label{IrD}  We would like to have also a representation in the vein of \eqref{re} for the fractional heat propagator $e^{-tH^\beta}$ with $\beta >1$ in terms of the Weyl transform. On the other hand, we have a convolution formula for the classical fractional heat propagator $e^{-t(-\Delta)^\beta}$. Regretfully, we do not know how to get fixed-time estimates for $\beta> 1$ via the Weyl transform at the time. 
\end{Remark}

\begin{Remark}
Using the fact that $ e^{-tH} $ commutes with the Fourier transform, i.e., $ \widehat{e^{-tH}f} = e^{-tH}\hat{f},$ one obtains
\[
\|e^{-tH}f\|_{\mathcal{F}L^q} \leq C  (\tanh t)^{-\frac{d}{2} \left|\frac{1}{q} - \frac{1}{p}\right| } \,    \|f\|_{\mathcal{F}L^p},
\]
where the Fourier-Lebesgue spaces $\mathcal{F}L^p(\mathbb R^d)$ is defined by 
\[ \mathcal{F}L^p(\mathbb R^d)= \left\{f\in \mathcal{S}'(\mathbb R^d): \|f\|_{\mathcal{F}L^{p}}\coloneqq  \|\hat{f}\|_{L^{p}}< \infty \right\}.\]  
\end{Remark}

\section{Proof of Theorem \ref{LWL}}\label{AWT}
\subsection{Part \eqref{LWL1} -- local wellposedness}  Fix $M_1 \geq \|u_0\|_{L^p}.$
The proof strategy is quite standard. Let $T>0$ and set \[Y_T=L^{\infty}\left( (0,T), L^p(\R^d)\right) \cap L^{\infty}\left((0,T), L^{p \gamma}(\R^d)\right),\]
endowed with a norm
\[\|u\|_{Y_T}=\max\left\{ \sup_{0<t<T} \|u(t)\|_{L^p}, \sup_{0<t<T}t^{\frac{d( \gamma-1)}{2p \gamma \, \beta}} \|u(t)\|_{L^{p\gamma}}  \right\}. \]
Moreover, consider
\[B_{M+1}=\{u \in Y_{T}:\|u\|_{Y_T}\leq M+1\}\]
where $M>0$ is chosen in such a way that $\|e^{-tH^\beta}u_0\|_{Y_T} \leq C M_1 \leq M$. Note that $M$ depends only on $\|u_0\|_{Y_T}$ -- in particular, it is independent of $t$. 

Consider the mapping $\Phi \colon B_{M +1}  \to Y_T$ defined by
\begin{align*}
\Phi[u](t)=e^{-tH^\beta}u_0+\int_0^t e^{-(t-\tau)H^\beta}\left(|u(\tau)|^{\gamma-1} \, u(\tau) \right) \, d\tau.  
\end{align*}
We shall show that in fact $\Phi$ is a mapping from $B_{M +1}$ into $B_{M +1}$. Indeed, consider $u \in B_{M +1}$. By Theorem \ref{mt}, for $q \in \{ p, p\gamma\}$, we have 
\begin{align*}
\left\|\int_0^t e^{-(t-\tau)H^\beta}\left(|u(\tau)|^{\gamma-1} \, u(\tau) \right) \, d\tau\right\|_{L^q}
& \leq   C \int_0^t (t-\tau)^{-\frac{d}{2\beta}[\frac{1}{p}-\frac{1}{q}]} \,  \|u(\tau)\|_{L^{p\gamma}}^{\gamma}  d\tau\\
& \leq  C (M+1)^{\gamma}\int_0^t (t-\tau)^{-\frac{d}{2\beta}[\frac{1}{p}-\frac{1}{q}]} \,  \tau^{-\frac{d(\gamma-1)}{2p\beta}}  d\tau\\
&= \begin{multlined}[t] C (M+1)^{\gamma} \, t^{1-\frac{d}{2\beta}[\frac{1}{p}-\frac{1}{q}]-\frac{d(\gamma-1)}{2p\beta}} \\ \times \int_0^1 (1-\tau)^{-\frac{d}{2\beta}[\frac{1}{p}-\frac{1}{q}]} \,  \tau^{-\frac{d(\gamma-1)}{2p\beta}}  d\tau.\end{multlined}
\end{align*}
Since $q=p$ or $q = p\gamma,~\gamma>1$ and  $p>p_c^\beta,$ we have 
\[ \int_0^1 (1-\tau)^{-\frac{d}{2\beta}[\frac{1}{p}-\frac{1}{q}]} \,  \tau^{-\frac{d(\gamma-1)}{2p\beta}}  d\tau <\infty.\]
Therefore, we infer 
\begin{equation} \label{E2.2}
t^{\frac{d}{2\beta}[\frac{1}{p}-\frac{1}{q}]}\left\|\int_0^t e^{-(t-\tau)H^\beta}\left(|u(\tau)|^{\gamma-1} \, u(\tau) \right) \, d\tau\right\|_{L^q}\leq C (M+1)^{\gamma} \, T^{1-\frac{d(\gamma-1)}{2p\beta}}.
\end{equation}
If we take $q =p$ or $q = p\gamma$ in \eqref{E2.2}, then 
\begin{eqnarray*}
\left\|\int_0^t e^{-(t-\tau)H^\beta}\left(|u(\tau)|^{\gamma-1} \, u(\tau) \right) \, d\tau\right\|_{L^p}\leq C_1 (M+1)^{\gamma} \, T^{1-\frac{d(\gamma-1)}{2p\beta}}
\end{eqnarray*}
or
\begin{eqnarray*}
t^{\frac{d(\gamma-1)}{2p\gamma \beta}} \, \left\|\int_0^t e^{-(t-\tau)H^\beta}\left(|u(\tau)|^{\gamma-1} \, u(\tau) \right) \, d\tau\right\|_{L^{p\gamma}}\leq C_2 (M+1)^{\gamma} \, T^{1-\frac{d(\gamma-1)}{2p\beta}}.
\end{eqnarray*}
As a result, we conclude that \[\|\Phi[u]\|_{Y_T}\leq M+\max \{C_1,C_2\} \, (M+1)^{\gamma} \, T^{1-\frac{d({\gamma}-1)}{2p\beta}}. \]
Moreover, for a sufficiently small $T > 0$, we have
\[\max \{C_1,C_2\} \, (M+1)^{\gamma} \, T^{1-\frac{d(\gamma-1)}{2p\beta}} \leq 1.\]
This shows that $\Phi$ is a mapping from $B_{M +1}$ into $B_{M +1}$, as claimed.

We now prove that $\Phi \colon B_{M +1} \to Y_T$ is a contraction mapping. Recall that 
\begin{eqnarray}\label{ei}
\left| |u|^{\gamma-1}u- |v|^{\gamma-1}v\right| \lesssim_{\gamma} \left( |u|^{\gamma-1} + |v|^{\gamma-1} \right) |u-v|.
\end{eqnarray}
By \eqref{ei} and H\"older  inequality, we have 
\begin{eqnarray*} \label{E2.3}
\||u|^{\gamma-1}u-|v|^{\gamma-1}v\|_{L^p}\leq \gamma \left(\|u\|_{L^{p\gamma}}^{\gamma-1}+\|v\|_{L^{p\gamma}}^{\gamma-1}\right) \, \, \|u-v\|_{L^{p\gamma}}.
\end{eqnarray*} 
 In light of the previous computation, for $u, v \in B_{M +1}$ and $q\in \{p, p\gamma\}$ we thus have 
\begin{align} \label{E2.4}
\|\Phi[u](t)-\Phi[v](t)\|_{L^q}\ & \leq  \gamma  \int_0^t (t-\tau)^{-\frac{d}{2\beta} \left( \frac{1}{p} -\frac{1}{q} \right)}  \left(\|u(\tau)\|_{L^{p\gamma}}^{\gamma-1}+\|v (\tau)\|_{L^{p\gamma}}^{\gamma-1}\right) \, \, \|u(\tau)-v(\tau)\|_{L^{p\gamma}} d\tau \nonumber  \\
& \leq   C_3 (M+1)^{\gamma-1} \, t^{1-\frac{d}{2\beta}[\frac{1}{p}-\frac{1}{q}]-\frac{d( \gamma-1)}{2p\beta}} \|u-v\|_{Y_T}
\end{align}
for a constant $C_3 > 0$. By taking $q = p$ or $q = p\gamma$ in \eqref{E2.4}, we similarly obtain
\[
\|\Phi[u](t)-\Phi[v](t)\|_{Y_T}\leq C_4 (M+1)^{\gamma-1} \,  T^{1-\frac{d(\gamma-1)}{2p\beta}} \|u-v\|_{Y_T}
\]
for a constant $C_4 > 0$. Since $1-\frac{d(\gamma-1)}{2p\beta} > 0$, for a sufficiently small $T > 0$ we have
\[C_4 (M+1)^{\gamma-1} \,  T^{1-\frac{d(\gamma-1)}{2p\beta}} \leq \frac{1}{2}.\]
We have thus proved that the mapping $\Phi$ is the contraction mapping for a sufficiently small
$T$. By Banach fixed point theorem, there exists a unique fixed point $u$ of the mapping $\Phi$
in $B_{M +1}$ and, in light of Duhamel's principle, the latter is a solution of \eqref{fHTE}.

\begin{Remark} We shall also mention that the result of Part \eqref{LWL1} can be alternatively derived from the abstract theorem of Weissler \cite[Theorem 1]{Fred1}. To this aim, we  define  $K_t(u)= e^{-tH^{\beta}} (|u|^{\gamma-1}u)$. Then for $t>0,$ $K_t:L^p(\R^d) \to L^p(\R^d)$ is  locally Lipschitz and 
\begin{align*}
\|K_t(u)-K_t(v)\|_{L^p} & \lesssim   t^{-\frac{d}{2\beta} \left( \frac{\gamma}{p} -   \frac{1}{p}\right)} \| |u|^{\gamma-1}u - |v|^{\gamma-1}v\|_{L^{\frac{p}{\gamma}}}\\
& \lesssim  t^{-\frac{d}{2\beta} \left( \frac{\gamma}{p} -   \frac{1}{p}\right)}
\left( \|u\|_{L^p}^{\gamma-1} + \|v\|_{L^p}^{\gamma-1} \right) \|u-v\|_{L^p}\\
& \lesssim  t^{-\frac{d}{2\beta} \left( \frac{\gamma}{p} -   \frac{1}{p}\right)} M^{\gamma-1} \|u-v\|_{L^p},
\end{align*}
for $\|u\|_{L^p}\leq M$ and $\|v\|_{L^p} \leq M.$  Since $p> \frac{d(\gamma-1)}{2\beta}$,  we have  $t^{-\frac{d}{2\beta} \left( \frac{\gamma}{p} -   \frac{1}{p}\right)} \in L^1_{\mathrm{loc}}(0, \infty).$  Note that $t\mapsto \|K_t(0)\|_{L^p}=0 \in L^1_{\mathrm{loc}} (0, \infty)$ and  $e^{-sH}K_{t}= K_{t+s}$ for $t,s >0$. Then \eqref{LWL1} follows by \cite[Theorem 1]{Fred1}.
\end{Remark}

\noindent
\subsection{Part \eqref{LWL2} -- lower blow-up rate} Let $u_0 \in L^p(\R^d)$ be such that $T_{\max}<\infty$, and let $u\in C\left( [0, T_{\max}), L^p(\R^d) \right)$ be the maximal solution of \eqref{fHTE}. Fix $s\in [0, T_{\max})$ and let 
\[w(t)= u(t+s), \quad t\in [0, T_{\max}-s ), \]
with $w(0)=u(s).$  Then, as in the proof of Part \eqref{LWL1}, we claim that
\begin{eqnarray}\label{dfp}
\|u(s)\|_{L^p} + K M^{\gamma} (T_{\max}-s)^{1- \frac{d(\gamma-1)}{2p\beta}}> M, \quad \forall M>0,
\end{eqnarray}
for some constant $K>0$. If this were not the case, there would exist $M>0$ such that 
\[ \|u(s)\|_{L^p}+K M^{\gamma} (T_{\max}-s)^{1- \frac{d(\gamma-1)}{2p\beta}} \leq M, \]
and $w$ would be defined on $[0, T_{\max}-s]$ -- in particular, $u(T_{\max})$ would be well defined, a contradiction.  Hence, \eqref{dfp} is verified, for any $t\in [0, T_{\max})$ fixed and for all $M>0$. 

Set then $M= 2 \|u(t)\|_{L^p}$. By \eqref{dfp}, we infer 
\[ \|u(t)\|_{L^p} + K2^{\gamma} \|u(t)\|_{L^p}^{\gamma} \left( T_{\max}-t \right)^{1- \frac{d(\gamma-1)}{2p\beta}}> 2 \|u(t)\|_{L^p}, \quad \forall t\in [0, T_{\max}). \] Hence,  we have 
\[ \|u(t)\|_{L^p} \geq C  \left( T_{\max}- t \right) ^{\frac{d}{2p\beta}-\frac{1}{\gamma-1}}\quad \text{for all} \ t \in [0, T_{\max}). \]

\subsection{Part \eqref{T1.3} -- global existence} Given  $\gamma>1$, one can choose $r$ in such a way that 
\[\frac{2 \beta}{d \gamma (\gamma -1)} < \frac 1r < \frac{2 \beta}{d(\gamma-1)}.\]
Let $r$ be fixed once for all and set
\[\delta=\frac{1}{\gamma-1}-\frac{d}{2r \beta}.\]
We observe that 
\[\delta+1-\frac{d(\gamma-1)}{2r\beta}-\delta \gamma=0.\]
Suppose that $\rho > 0$ and $M > 0$ satisfy the inequality
\[\rho+KM^{\gamma} \leq M,\] where $K = K(\gamma, d, r) > 0$ is a constant and can explicitly be computed.
We claim that if 
\begin{equation}\label{t1}
\sup_{t>0} t^{\delta} \|e^{-tH^{\beta}}u_0\|_{L^r} \leq \rho
\end{equation}
then  there is a unique global solution $u$ of \eqref{fHTE} such that
\begin{equation}\label{c1}
\sup_{t>0} t^{\delta} \|u(t)\|_{L^{r}} \leq M.
\end{equation}
In order to prove our claim, consider 
\[X= \left\{ u\colon (0, \infty)\to L^{r}(\R^d):\sup_{t>0} t^{\delta} \|u(t)\|_{L^r}< \infty \right\},\]
\[X_{M}= \left\{ u\in X:  \sup_{t>0} t^{\delta} \|u(t)\|_{L^r} \leq M \right\},\quad d(u,v)= \sup_{t>0} t^{\delta} \|u(t) -v(t)\|_{L^r}.\] It is easy to realize that $(X_M,d)$ is a complete metric space.

Consider now the mapping
\begin{align}\label{dhumalterm}
 \mathcal{J}_{u_0}(u)(t)= e^{-tH^{\beta}} u_0+\int_0^t e^{- (t-s) H^{\beta}} (|u(s)|^{\gamma-1} u(s)) ds. 
 \end{align}
Let $u_0$ and $v_0$ satisfy \eqref{t1} and choose $u,v \in X_M$. Clearly, we have 
\begin{multline*} t^{\delta} \| \mathcal{J}_{u_0}u(t)- \mathcal{J}_{v_0}v(t)\|_{L^r} \leq t^{\delta} \|e^{-tH^{\beta}}(u_0-v_0)\|_{L^r} \\ + t^{\delta}\int_0^t\| e^{- (t-s) H^{\beta}} (|u(s)|^{\gamma-1} u(s)-|v(s)|^{\gamma-1} v(s))\|_{L^r} ds. \end{multline*}
Using Theorem \ref{mt} with exponents $(p, q)=(r/\gamma, r),$ \eqref{ei} and H\"older's inequality,  we obtain
\begin{align*}
\| e^{- (t-s) H^{\beta}} (|u(s)|^{\gamma-1} u(s)-&|v(s)|^{\gamma-1} v(s))\|_{L^r} \\ 
& \lesssim  (t-s)^{-\frac{d(\gamma-1)}{2r\beta}}  \||u(s)|^{\gamma-1} u(s)-|v(s)|^{\gamma-1} v(s)\|_{L^{\frac{r}{\gamma}}}\\
& \lesssim  (t-s)^{-\frac{d(\gamma-1)}{2r\beta}} \gamma \left( \|u(s)\|_{L^r}^{\gamma-1} + \|v(s)\|_{L^r}^{\gamma-1} \right) \|u(s)-v(s)\|_{L^r}\\
& \lesssim  (t-s)^{-\frac{d(\gamma-1)}{2r\beta}} \gamma s^{-\delta \gamma} M^{\gamma-1} d(u,v).
\end{align*}
Using this inequality,  we get
\begin{eqnarray} \label{Contraction}
t^{\delta} \| \mathcal{J}_{u_0}u(t)- \mathcal{J}_{v_0}v(t)\|_{L^r} &  \leq & t^{\delta} \|e^{-tH^{\beta}}(u_0-v_0)\|_{L^r} + t^{\delta} \gamma M^{\gamma-1} d(u,v )\int_0^t (t-s)^{-\frac{d(\gamma-1)}{2r\beta}} s^{-\delta \gamma} ds\nonumber\\
&\leq & t^{\delta} \|e^{-tH^{\beta}}(u_0-v_0)\|_{L^r} +K \,  M^{\gamma-1} d(u,v ), 
\end{eqnarray}
where $K=t^{\delta} \gamma \,  \int_0^t (t-s)^{-\frac{d(\gamma-1)}{2r\beta}} s^{-\delta \gamma} ds$ is a finite positive constant.
Indeed, since 
\[\delta \gamma <1,~~\frac{d(\gamma-1)}{2r\beta}< 1,\] we see that 
\[ \int_0^t (t-s)^{-\frac{d(\gamma-1)}{2r\beta}} s^{-\delta \gamma} ds=t^{1-\frac{d(\gamma-1)}{2r\beta}-\delta \gamma}\int_0^1 (1-s)^{-\frac{d(\gamma-1)}{2r\beta}} s^{-\delta \gamma} ds<\infty.\]
Setting $v_0=0$ and $v=0$ in \eqref{Contraction} we have
\[t^{\delta} \| \mathcal{J}_{u_0}u(t)\|_{L^r}\leq \rho+KM^{\gamma}\leq M.\]
That is, $\mathcal{J}_{u_0}$ maps $X_M$ into itself. Letting $u_0=v_0$ in \eqref{Contraction}, we note that
\[d (\mathcal{J}_{u_0}u(t), \mathcal{J}_{u_0}v(t)) \leq K \,  M^{\gamma-1} d(u,v ). \]
Since $KM^{\gamma-1} < 1$, then $\mathcal{J}_{u_0}$ is a strict contraction on $X_M$. Therefore, $\mathcal{J}_{u_0}$ has a unique fixed point $u$ in $X_M$, which is a solution of \eqref{dhumalterm}.

Finally,  using Theorem \ref{mt} with exponents $(p,q)=\left(\frac{d(\gamma-1)}{2\beta}, r \right)$,  we see that if $\|u_0\|_{L^{p_c^{\beta}}}$ is sufficiently small then \eqref{t1} is satisfied. 

\section{Concluding  remarks}\label{cr}
In this concluding section we illustrate another application of Theorem \ref{mt}, that is a set of Strichartz estimates for the fractional heat propagator. We emphasize that Strichartz estimates are indispensable tools for a thorough study of the wellposedness theory for nonlinear equations -- see e.g., \cite{tao2006nonlinear, wang2011harmonic}. Since the proof is based on a standard machinery, via $TT^{\star}$ method and  real  interpolation (see  for instance   \cite[Lemma 3.2]{miao2008well}  and \cite[Theorem 1.4]{zhai2009strichartz} and the references therein),  we omit the details.

We say that $(q, p,r)$ is an \textit{admissible triplet} of indices if $1\leq p\leq r\leq \infty,  \beta>0$ and \[\frac1q = \frac{d}{2\beta}\left( \frac1r-\frac{1}{p}\right).\]
\begin{theorem} \label{sl} Let $I=[0,T)$ with $0<T\leq \infty$ and $\beta >0.$
\begin{enumerate}
\item \label{sl3} Let $(q, p,r)$ be any admissible triplet and consider $ f \in L^r(\R^d)$. Then $e^{-tH^{\beta}}f \in L^q(I, L^p(\R^d)) \cap C_b(I, L^r(\R^d))$ and there exists a constant $C>0$ such that
\[\|e^{-tH^{\beta}} f\|_{L^q(I, L^p)} \le C \|f\|_{L^r}.\]
\item \label{sl4} Let $p_1',p \in (1,  \infty),$ or $(p_1',p)=(1, \infty),$ or $p_1'=1$ and $p \in [2, \infty),$ or  $p_1'\in (1, \infty)$ and $p=1.$  Assume that  $(q, p)$ and  $(q_1, p_1)$ satisfy $ p_1' \ne p, 1<q_1'<q< \infty$ and
\begin{eqnarray}\label{wad}
\frac{1}{q_1'} + \frac{d}{2\beta} \left| \frac{1}{p_1'} - \frac{1}{p} \right| = 1+ \frac{1}{q}.
\end{eqnarray} Then, there exists a constant $C>0$ such that
\begin{eqnarray*}\label{ihs}
\left\| \int_0^t e^{- (t-s) H^{\beta}} F(s) ds \right\|_{L^q(I, L^p(\R^d))} \le C \|F\|_{L^{q_1'}(I, L^{p_1'} (\R^d))}.
 \end{eqnarray*}
\end{enumerate}
\end{theorem}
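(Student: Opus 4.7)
The plan is to derive both parts of Theorem \ref{sl} from the fixed-time dispersive estimate \eqref{eq mainthm} provided by Theorem \ref{mt}, combined with standard harmonic-analytic machinery: Hardy--Littlewood--Sobolev (HLS) in the time variable for the inhomogeneous estimate, and a $TT^\star$/real interpolation scheme for the homogeneous one.

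The inhomogeneous estimate \eqref{sl4} is the more direct of the two, so I would tackle it first. Setting $\sigma \coloneqq \frac{d}{2\beta}\bigl|\tfrac{1}{p_1'}-\tfrac{1}{p}\bigr|$, the hypotheses of Part \eqref{sl4} are precisely those for which Theorem \ref{mt} applies with exponents $(p_1',p)$, giving $\|e^{-\tau H^\beta}g\|_{L^p}\lesssim \tau^{-\sigma}\|g\|_{L^{p_1'}}$ for $0<\tau\le 1$ (with exponential decay beyond). Applying Minkowski's inequality in the Bochner integral and then this bound pointwise in $(t,s)$ yields
\[
\Bigl\|\int_0^t e^{-(t-s)H^\beta}F(s)\,ds\Bigr\|_{L^p}\lesssim \int_0^t (t-s)^{-\sigma}\|F(s)\|_{L^{p_1'}}\,ds.
\]
The right-hand side is the Riesz-type convolution of $t\mapsto\|F(t)\|_{L^{p_1'}}$ with the locally integrable kernel $t^{-\sigma}_+$ on $I$. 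Taking the $L^q_t$ norm and invoking HLS in one variable, this convolution maps $L^{q_1'}(I)$ into $L^q(I)$ provided $\frac{1}{q_1'}+\sigma=1+\frac{1}{q}$, which is exactly the admissibility relation \eqref{wad}. The condition $1<q_1'<q<\infty$ together with $\sigma\in(0,1)$ (coming from $p_1'\ne p$) guarantees we are in the range covered by HLS, so the claimed estimate follows.

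For the homogeneous estimate \eqref{sl3}, I would start from the dispersive bound, which for an admissible triplet $(q,p,r)$ reads
\[
\|e^{-tH^\beta}f\|_{L^p}\lesssim \min\bigl(t^{-1/q},e^{-td^\beta}\bigr)\|f\|_{L^r},\qquad t>0,
\]
since under admissibility the decay exponent $\sigma_\beta$ in Theorem \ref{mt} equals $1/q$. The function $t\mapsto t^{-1/q}$ lies in weak-$L^q(\mathbb{R}^+)$ but not in $L^q$, which yields only the weak-type bound $\|e^{-tH^\beta}f\|_{L^{q,\infty}(I;L^p)}\lesssim\|f\|_{L^r}$. To upgrade to the strong $L^q$-bound, I would fix $r$ and exploit the fact that the admissibility condition expresses $1/q$ as a linear function of $1/p$, so nearby admissible triples $(q_j,p_j,r)$, $j=0,1$, with $q_0<q<q_1$ are readily available. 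Real interpolation (Marcinkiewicz) between the two weak-type endpoints, with the correct $\theta\in(0,1)$ matching $1/q=(1-\theta)/q_0+\theta/q_1$ and $1/p=(1-\theta)/p_0+\theta/p_1$, delivers the strong bound in $L^q(I;L^p)$. Boundedness into $C_b(I;L^r)$ is obtained by a standard density argument, combining the $L^r\to L^r$ bound of Theorem \ref{mt} with strong continuity of $t\mapsto e^{-tH^\beta}f$ on $L^r$ for $f$ in a dense subclass (say Schwartz functions, for which explicit control via the Mehler kernel / Bochner subordination is available).

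The main technical obstacle is the bookkeeping at the interpolation step in Part \eqref{sl3}, particularly near the endpoints where one of the $p_j$ or $q_j$ reaches $1$ or $\infty$; here one must ensure that a pair of admissible auxiliary triples straddling $(q,p,r)$ is actually available within the range covered by Theorem \ref{mt}, since some endpoint cases of \eqref{eq mainthm} are excluded for $\beta>1$. In those borderline regimes one may need to trade the real interpolation argument for a direct $TT^\star$ computation at $r=2$: writing $T f(t)=e^{-tH^\beta}f$ and using the self-adjointness of $H^\beta$, one has $TT^\star g(t)=\int_I e^{-(t+s)H^\beta}g(s)\,ds$, whose $L^{q'}(I;L^{p'})\to L^q(I;L^p)$ boundedness follows by the same Minkowski+dispersive+HLS-type scheme used in Part \eqref{sl4}, with the kernel $(t+s)^{-2/q}$ controlled via the Hilbert--Hardy inequality. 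Beyond this, everything is routine, which is why the authors are content to omit the details.
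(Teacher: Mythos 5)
Your proposal follows essentially the same route as the paper, which deliberately omits the details and only records the strategy: the fixed-time decay of Theorem \ref{mt} combined with the one-dimensional Hardy--Littlewood--Sobolev inequality for Part \eqref{sl4}, and the $TT^{\star}$ method with real interpolation between weak-type bounds for Part \eqref{sl3}, exactly as in \cite[Lemma 3.2]{miao2008well} and \cite[Theorem 1.4]{zhai2009strichartz}. Your identification of \eqref{wad} together with the constraints $p_1'\neq p$ and $1<q_1'<q<\infty$ as the conditions for HLS, and of the hypotheses on $(p_1',p)$ as the range in which Theorem \ref{mt} is applicable, matches the paper's own concluding remark.
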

We note that  Pierfelice \cite{pierfelice2006strichartz} studied  Strichartz estimates for  \eqref{fHTE} with $H= -\Delta$, while Miao-Yuan-Zhang \cite{miao2008well} and  Zhai \cite{zhai2009strichartz} obtained Strichartz estimates for the fractional Laplacian $(-\Delta)^{\beta}$.  

\begin{Remark} Taking Theorem \ref{mt} into account, 
part \eqref{sl3} of Theorem \ref{sl},  can be proved in analogy with \cite[Lemma 3.2]{miao2008well}  and part \eqref{sl4} of Theorem \ref{sl} can be proved in analogy with  \cite[Theorem 1.4]{zhai2009strichartz}.  The property \eqref{wad}
is weaker than  the admissibility of triplets  $(q, p,2)$ and $(q_1, p_1,2)$. The hypothesis \eqref{wad} and the constraint $ p_1' \ne p,~1<q_1'<q< \infty$ appear as a consequence of the Hardy-Littlewood-Sobolev inequality.  While the hypothesis on $(p_1',p)$ is needed in order to apply Theorem \ref{mt} when dealing with truncated decay estimates (see \cite[Section 3.2]{zhai2009strichartz}).
\end{Remark}
\section*{Acknowledgments} \noindent
 D.G.B. is thankful to DST-INSPIRE (DST/INSPIRE/04/2016/001507) for the research grant. \\ 
 \noindent R.M. acknowledges the support of DST-INSPIRE [DST/INSPIRE/04/2019/001914] for research grants.\\
\noindent F.N.\ and S.I.T. are members of the Gruppo Nazionale per l'Analisi Matematica, la Probabilità e le loro Applicazioni (GNAMPA) of the Istituto Nazionale di Alta Matematica (INdAM). \\ 
\noindent S.T. is supported by J.C.Bose Fellowship from DST, Government of India.


\begin{thebibliography}{10}
	
	\bibitem{KassoBook}
	\'{A}rp\'{a}d B\'{e}nyi and Kasso~A. Okoudjou, \emph{Modulation spaces---with
		applications to pseudodifferential operators and nonlinear {S}chr\"{o}dinger
		equations}, Applied and Numerical Harmonic Analysis, Birkh\"{a}user/Springer,
	New York, 2020.
	
	\bibitem{bhimani_19}
	Divyang~G. Bhimani, \emph{Global well-posedness for fractional {H}artree
		equation on modulation spaces and {F}ourier algebra}, J. Differential
	Equations \textbf{268} (2019), no.~1, 141--159. 
	
	\bibitem{bhimani2020phase}
	Divyang~G. Bhimani, Ramesh Manna, Fabio Nicola, Sundaram Thangavelu, and
	S.~Ivan Trapasso, \emph{Phase space analysis of the {H}ermite semigroup and
		applications to nonlinear global well-posedness}, Adv. Math. \textbf{392}
	(2021), Paper No. 107995, 18 pp.
	
	\bibitem{bogdan2009potential}
	Krzysztof Bogdan, Tomasz Byczkowski, Tadeusz Kulczycki, Michal Ryznar, Renming
	Song, and Zoran Vondracek, \emph{Potential analysis of stable processes and
		its extensions}, Springer Science \& Business Media, 2009.
	
	\bibitem{bogdan2016hardy}
	Krzysztof Bogdan, Bart{\l}omiej Dyda, and Panki Kim, \emph{Hardy inequalities
		and non-explosion results for semigroups}, Potential Anal \textbf{44} (2016),
	229--247.
	
	\bibitem{brezis1996nonlinear}
	Ha{\"\i}m Brezis and Thierry Cazenave, \emph{A nonlinear heat equation with
		singular initial data}, Journal D'Analyse Math{\'e}matique \textbf{68}
	(1996), 277--304.
	
	\bibitem{cordero2020time}
	Elena Cordero and Luigi Rodino, \emph{Time-frequency analysis of operators},
	Time-Frequency Analysis of Operators, de Gruyter, 2020.
	
	\bibitem{dias_22}
	Nuno~C. Dias, Franz Luef, and Jo\~{a}o~N. Prata, \emph{Uncertainty principle
		via variational calculus on modulation spaces}, J. Funct. Anal. \textbf{283}
	(2022), no.~8, Paper No. 109605, 30. 
	
	\bibitem{Feih83}
	Hans G. Feichtinger, \emph{Modulation spaces on locally compact abelian groups},
	Technical report, University of Vienna, 1983, Proceedings of the
	International Conference on Wavelet and Applications, 2002, New Delhi Allied
	Publishers, India, 2003.
	
	\bibitem{fei_21} Hans G. Feichtinger, Karlheinz Gr\"ochenig, Kuijie Li, Baoxiang Wang. Navier-Stokes equation in super-critical spaces $E^s_{p, q}$. \textit{Ann. Inst. H. Poincaré C Anal. Non Linéaire} \textbf{38} (2021), no. 1, 139--173. 
	
	\bibitem{garofalo2017fractional}
	Nicola Garofalo, \emph{Fractional thoughts}, New developments in the analysis
	of nonlocal operators, Contemp. Math., vol. 723, Amer. Math. Soc.,
	[Providence], RI, [2019], pp.~1--135.
	
	\bibitem{todor}
	Todor Gramchev, Stevan Pilipovi\'{c}, and Luigi Rodino, \emph{Classes of
		degenerate elliptic operators in {G}elfand-{S}hilov spaces}, New developments
	in pseudo-differential operators, Oper. Theory Adv. Appl., vol. 189,
	Birkh\"{a}user, Basel, 2009, pp.~15--31.
	
	\bibitem{grochenig2013foundations}
	Karlheinz Gr{\"o}chenig, \emph{Foundations of time-frequency analysis}, Applied
	and Numerical Harmonic Analysis, Birkh\"auser Boston, Inc., Boston, MA, 2001.
	
	\bibitem{haraux1982non}
	Alain Haraux and Fred~B Weissler, \emph{Non-uniqueness for a semilinear initial
		value problem}, Indiana University Mathematics Journal \textbf{31} (1982),
	no.~2, 167--189.
	
	\bibitem{laskin2002fractional}
	Nick Laskin, \emph{Fractional {S}chr{\"o}dinger equation}, Physical Review E
	\textbf{66} (2002), no.~5, 056108.
	
	\bibitem{lebedevspecial}
	NN~Lebedev, \emph{Special functions and their applications (Dover Publications
		inc., New York, 1972).}
	
	\bibitem{lee2012schrodinger}
	Sanghyuk Lee and Keith~M Rogers, \emph{The {S}chr{\"o}dinger equation along
		curves and the quantum harmonic oscillator}, Advances in Mathematics
	\textbf{229} (2012), no.~3, 1359--1379.
	
	\bibitem{manna_22}
	Ramesh Manna, \emph{On the existence of global solutions of the {H}artree
		equation for initial data in the modulation space {$M^{p,q}(\Bbb R)$}}, J.
	Differential Equations \textbf{317} (2022), 70--88. 
	
	\bibitem{miao2008well}
	Changxing Miao, Baoquan Yuan, and Bo~Zhang, \emph{Well-posedness of the cauchy
		problem for the fractional power dissipative equations}, Nonlinear Analysis:
	Theory, Methods \& Applications \textbf{68} (2008), no.~3, 461--484.
	
	\bibitem{NicolaBook}
	Fabio Nicola and Luigi Rodino, \emph{Global pseudo-differential calculus on
		{E}uclidean spaces}, Pseudo-Differential Operators. Theory and Applications,
	vol.~4, Birkh\"{a}user Verlag, Basel, 2010.
	
	\bibitem{NT_jmp}
	Fabio Nicola and S.~Ivan Trapasso, \emph{Approximation of {F}eynman path
		integrals with non-smooth potentials}, J. Math. Phys. \textbf{60} (2019),
	no.~10, 102103, 13. 
	
	\bibitem{pierfelice2006strichartz}
	Vittoria Pierfelice, \emph{Strichartz estimates for the schr{\"o}dinger and
		heat equations perturbed with singular and time dependent potentials},
	Asymptotic Analysis \textbf{47} (2006), no.~1, 2, 1--18.
	
	\bibitem{shubin1987pseudodifferential}
	Mikhail~Aleksandrovich Shubin, \emph{Pseudodifferential operators and spectral
		theory}, vol.~57, Springer, 1987.
	
	\bibitem{stein1993harmonic}
	Elias~M Stein and Timothy~S Murphy, \emph{Harmonic analysis: real-variable
		methods, orthogonality, and oscillatory integrals}, vol.~3, Princeton
	University Press, 1993.
	
	\bibitem{sugimoto2011remarks}
	Mitsuru Sugimoto, Naohito Tomita, and Baoxiang Wang, \emph{Remarks on nonlinear
		operations on modulation spaces}, Integral transforms and special functions
	\textbf{22} (2011), no.~4-5, 351--358.
	
	\bibitem{tao2006nonlinear}
	Terence Tao, \emph{Nonlinear dispersive equations: local and global analysis},
	no. 106, American Mathematical Soc., 2006.
	
	\bibitem{thangavelu1993lectures}
	Sundaram Thangavelu, \emph{Lectures on {H}ermite and {L}aguerre expansions},
	vol.~42, Princeton University Press, 1993.
	
	\bibitem{thangavelu2018note}
	\bysame, \emph{A note on fractional powers of the {H}ermite operator},
	arXiv:1801.08343 (2018).
	
	\bibitem{wang2011harmonic}
	Baoxiang Wang, Zhaohui Huo, Zihua Guo, and Chengchun Hao, \emph{Harmonic
		analysis method for nonlinear evolution equations, i}, World Scientific,
	2011.
	
	\bibitem{Fred1}
	Fred~B. Weissler, \emph{Semilinear evolution equations in {B}anach spaces}, J.
	Functional Analysis \textbf{32} (1979), no.~3, 277--296.
	
	\bibitem{weissler1980local}
	Fred~B Weissler, \emph{Local existence and nonexistence for semilinear
		parabolic equations in {$L^p$}}, Indiana University Mathematics Journal
	\textbf{29} (1980), no.~1, 79--102.
	
	\bibitem{wong2005}
	M.~W. Wong, \emph{The heat equation for the {H}ermite operator on the
		{H}eisenberg group}, Hokkaido Math. J. \textbf{34} (2005), no.~2, 393--404.
	
	\bibitem{zhai2009strichartz}
	Zhichun Zhai, \emph{Strichartz type estimates for fractional heat equations},
	Journal of mathematical analysis and applications \textbf{356} (2009), no.~2,
	642--658.
	
\end{thebibliography}

\end{document}